\documentclass[reqno]{amsart}
\usepackage{amssymb}
\usepackage{amscd}
\usepackage[dvips]{graphics}
\usepackage{mathrsfs}
\usepackage{bm}
\usepackage{cite}
\usepackage{color}
\usepackage{hyperref}

\def\beq{\begin{equation}}
\def\eeq{\end{equation}}
\def\ba{\begin{array}}
\def\ea{\end{array}}


\numberwithin{equation}{section}
\newtheorem{theorem}{Theorem}[section]

\newtheorem{proposition}[theorem]{\textbf{Proposition}}
\newtheorem{lemma}[theorem]{Lemma}
\newtheorem{thm}{Theorem} 
\newtheorem{rem}{\textbf{Remark}}[thm]
\renewenvironment{proof}{\noindent{\textbf{Proof.}}}{\hfill$\Box$}
\theoremstyle{remark}
\newtheorem{remark}[theorem]{\textbf{Remark}}
\theoremstyle{plain}

\allowdisplaybreaks[4]


\begin{document}
\title[Reversed HLS inequality on $\mathbb{H}^n$ and $\mathbb{S}^{2n+1}$]{Reversed Hardy-Littlewood-Sobolev inequality on Heisenberg group $\mathbb{H}^n$ and CR sphere $\mathbb{S}^{2n+1}$}


\author{Yazhou Han}
\address{Yazhou Han, Department of Mathematics, College of Science, China Jiliang University, Hangzhou, 310018, China} \email{yazhou.han@gmail.com}


\author{Shutao Zhang}
\address{Shutao Zhang, Department of Mathematics, College of Science, China Jiliang University, Hangzhou, 310018, China} \email{zhangst@cjlu.edu.cn}




\begin{abstract}
    This paper is mainly devoted to the study of the reversed Hardy-Littlewood-Sobolev (HLS) inequality on Heisenberg group $\mathbb{H}^n$ and CR sphere $\mathbb{S}^{2n+1}$. First, we establish the roughly reversed HLS inequality and give a explicitly lower bound for the sharp constant.
    Then, the existence of the extremal functions with sharp constant is proved by \textit{subcritical approach} and some compactness techniques. 
    Our method is \textit{rearrangement free} and can be applied to study the classical HLS inequality and other similar inequalities.
\end{abstract}
\keywords{Heisenberg group, Reversed Hardy-Littlewood-Sobolev inequality, Subcritical approach, Rearrangement free method.}
\subjclass[2010]{26D10, 30C70, 45E10}
\maketitle

\section{Introduction}

Heisenberg group is one of the simplest noncommutative geometries and is the model space of CR manifolds, which arise from the study of real hypersurfaces of complex manifolds. 
It is well-known that the non-commutativity and the complex structure induced from complex manifolds inspire many interesting geometric properties and bring some new difficulties. In the past few decades,
sharp inequalities such as Sobolev inequality \cite{Folland-Stein1974, JL1988, Folland1975, Han-Zhang2020}, Hardy-Littlewood-Sobolev(HLS) inequality \cite{Folland-Stein1974, Frank-Lieb2012}, 
Moser-Trudinger inequality \cite{BFM2013, Cohn-Lu2004, Cohn-Lu2001}, Hardy inequality \cite{Garofalo-Lanconelli1990, Niu-Zhang-Wang01}, Hardy-Sobolev inequality \cite{Han-Niu2005}, etc., play important roles in the study of problems defined on Heisenberg group and CR manifolds. In this paper, we mainly concern with \textit{reversed Hardy-Littlewood-Sobolev inequality} on $\mathbb{H}^n$ and CR sphere $\mathbb{S}^{2n+1}$.



\subsection{HLS and reversed HLS inequalities on $\mathbb{R}^n$}
The classical HLS inequality \cite{HL1928, HL1930, So1963} on $\mathbb{R}^n$ states that
\begin{equation}\label{HLS Rn}
  \left|\int_{\mathbb{R}^n}\int_{\mathbb{R}^n} \frac{f(x) g(y)}{|x-y|^{n-\alpha}}dxdy\right|\leq N_{p,\alpha,n}\|f\|_p\|g\|_t
\end{equation}
holds for all $f\in L^p(\mathbb{R}^n),\ g\in L^t(\mathbb{R}^n)$, where $0<\alpha<n$ and $1<p,t<+\infty$ satisfying
\begin{equation}\label{HLS relation Rn}
  \frac 1p+\frac 1t+\frac{n-\alpha}{n}=2.
\end{equation}
Using rearrangement inequalities, Lieb \cite{Lieb1983} proved the existence of the extremal functions to the inequality \eqref{HLS Rn} with the sharp constant. 
For the conformal case $p=t=\frac{2n}{n+\alpha}$, 
he classified the extremal functions and computed the best constant (different discussions can be found in \cite{Lieb-Loss2001, Carlen-Loss}). In fact, he proved that the extremal functions with $p=t$ are given by 
\begin{equation}\label{bubbling solution of HLS}
    f_\epsilon(x)=c_1g_\epsilon(x)=c\left(\frac \epsilon {\epsilon^2+|x-x_0|^2}\right)^{(n+\alpha)/2},
\end{equation}
where $c_1,c$ and $\epsilon$ are constants, $x_0$ is some point in $\mathbb{R}^n$. Recently, the solutions of the Euler-Lagrange equation in the conformal case were classified by the method of moving planes\cite{CLO2006} and the method of moving spheres\cite{Li2004}, respectively.

For $0<p,t<1$ and $\alpha>n$ satisfying \eqref{HLS relation Rn}, Dou and Zhu \cite{DZ2015} (also see \cite{B2015, Ngo-Nguyen2107}) established a class of reversed HLS inequality  
\begin{equation}\label{HLS re Rn}
  \int_{\mathbb{R}^n}\int_{\mathbb{R}^n} \frac{f(x) g(y)}{|x-y|^{n-\alpha}}dxdy\geq N_{p,\alpha,n}\|f\|_p\|g\|_t,
\end{equation}
where $f\in L^p(\mathbb{R}^n),\ g\in L^t(\mathbb{R}^n)$ are nonnegative functions. Employing the rearrangement inequalities and the method of moving spheres, they also classified the extremal functions and computed the best constant in the conformal case. In fact, they found that the extremal functions of \eqref{HLS re Rn} in the conformal case are given as \eqref{bubbling solution of HLS}, too.

%
%

As stated above, it can be found that \textit{rearrangement inequalities, the method of moving planes and the method of moving spheres} are basic and important tools in the study of HLS inequalities. More applications of these techniques  can be found in the study of HLS inequalities and reversed HLS inequalities on the upper half space (see \cite{DZ2015a, Dou-Guo-Zhu2017,  Ngo-Nguyen2017b,  HWY2007} and the references therein).

Note that $f_\epsilon$ and $g_\epsilon$ will blow up as $\epsilon\rightarrow 0^+$, and vanish as $\epsilon\rightarrow +\infty$. The phenomenon makes it difficult to study the extremal problems. 
To overcome the difficulty, we often renormalize  the extremal sequence. For example, Lieb\cite{Lieb1983} renormalized the extremal sequence $\{f_j(x)\}$ so that it satisfies $f_j(x)>\beta>0$ if $|x|=1$. The technique can also be found in \cite{DZ2015}.

Recently, Dou, Guo and Zhu\cite{Dou-Guo-Zhu2017} adopted the \textit{subcritical approach} to study sharp HLS type inequalities on the upper half space. By Young inequality, they first established two classes of HLS type inequalities with subcritical power on a ball. Then, using the conformal transformation between ball and upper half space and the method of moving planes, they proved that the extremal functions of HLS type inequalities with subcritical power are constant functions. Passing to the limit from subcritical power to critical power, they obtained two classes of sharp HLS type inequalities on the upper half space.
In the process of taking the limit, since these extremal functions of HLS type inequalities with subcritical power are constant functions, we can choose every extremal function to be $f\equiv 1$ and avoid efficiently the blow-up phenomenon.



\subsection{HLS inequlity on the Heisenberg group} We first recall Heisenberg group and some notations.

Heisenberg group $\mathbb{H}^n$ consists of the set
$$\mathbb{C}^n \times \mathbb{R}=\{(z,t):z=(z_1, \cdots, z_n)\in \mathbb{C}^n,t\in  \mathbb{R}\}$$
with the multiplication law
$$
(z,t)(z',t')=(z+z', t+t'+2Im(z \cdot \overline{z'})),
$$
where $z \cdot \overline{z'}=\sum_{j=1}^n z_j \overline{z_j'}$, $z_j=x_j+\sqrt{-1}y_j$ and $\overline{z_j}=x_j-\sqrt{-1}y_j$.

For any points $u=(z,t),\ v=(z',t')\in\mathbb{H}^n$, denote the norm function by $|u|=(|z|^4+t^2)^{1/4}$ and the distance between $u$ and $v$ by $|v^{-1}u|$. Moreover, there exists a constant $\gamma\geq 1$ such that $|uv|\leq \gamma(|u|+|v|)$ holds for all $u,v\in\mathbb{H}^n$. Write $B(u,R)=\{v\in\mathbb{H}^n:\ |u^{-1}v|<R\}$  as the ball centered at $u$ with radius $R$. For any $\lambda>0$, the dilation $\delta_\lambda(u)$ is defined as $\delta_\lambda(u)=(\lambda z,\lambda^2 t)$, and $Q=2n+2$  is the homogeneous dimension with respect to the dilations. For more details about Heisenberg group, please see \cite{Dragomir-Tomassini2006, Folland-Stein1974} and the references therein.

To study the sigular integral operator on CR manifolds, Folland and Stein \cite{Folland-Stein1974} established the following HLS inequality
\begin{equation}\label{HLS Hn roughly}
    \left|\int_{\mathbb{H}^n}\int_{\mathbb{H}^n} \overline{f(u)}g(v)|v^{-1}u|^{\alpha-Q}dv du\right| \leq D(n,\alpha,p)\|f\|_{L^{q}(\mathbb{H}^n)} \|g\|_{L^p(\mathbb{H}^n)},
\end{equation}
where $f\in L^{q},\ g\in L^p$, $0<\alpha<Q$, $\frac 1{q}+\frac 1p+\frac{Q-\alpha}Q=2$ and $du=dzdt=dxdydt$ is the Haar measure on $\mathbb{H}^n$. In fact, the inequality \eqref{HLS Hn roughly} can be deduced from Proposition 8.7 of \cite{Folland-Stein1974}. 

Since rearrangement inequalities do not work efficiently on Heisenberg group, it took a quite long time to study the problems about the sharp constant and extremal functions  of \eqref{HLS Hn roughly}. In 2012, Frank and Lieb \cite{Frank-Lieb2012} studied the conformal case $p=q=\frac{2Q}{Q+\alpha}$ of \eqref{HLS Hn roughly}. They introduced a class of \textit{rearrangement free method} and classified the extremal functions. 
Then, sharp constants were computed for the HLS inequality, Sobolev inequality and their limiting cases on Heisenberg group and CR sphere $\mathbb{S}^{2n+1}$. Their results about HLS inequality on $\mathbb{H}^n$  can be stated as follows.
\begin{thm}[Sharp HLS inequality on $\mathbb{H}^n$]\label{thm HLS Hn sharp}
Let $0<\alpha<Q$ and $p_\alpha=\frac{2Q}{Q+\alpha}$. Then for any $f,g\in L^{p_\alpha}(\mathbb{H}^n)$,
\begin{equation}\label{HLS Hn sharp}
    \left|\int_{\mathbb{H}^n} \int_{\mathbb{H}^n} \overline{f(u)}|v^{-1}u|^{-(Q-\alpha)} g(v) dv du\right|\le D_{n,\alpha} ||f||_{L^{p_\alpha}(\mathbb{H}^n)} ||g||_{L^{p_\alpha}(\mathbb{H}^n)},
\end{equation}
where
\begin{equation}\label{extreCon}
D_{n,\alpha}:=\left(\frac{\pi^{n+1}}{2^{n-1}n!}\right)^{(Q-\alpha)/Q} \frac{n!\Gamma(\alpha/2)}{\Gamma^2((Q+\alpha)/4)}.
\end{equation}
And the equality holds if and only if
\begin{equation}\label{HLS-ex}
f(u)=c_1g(u)=c_2H(\delta_r(u_0^{-1}u),
\end{equation}
for some $c_1, \ c_2\in\mathbb{C}$,\ $r>0$ and $u_0\in\mathbb{H}^n$ (unless $f\equiv 0$ or $g\equiv 0$). Here $H$ is defined as
\begin{equation}\label{HLS-ex1}
    H(u)=H(z,t)=((1+|z|^2)^2+t^2)^{-(Q+\alpha)/4}.
\end{equation}
\end{thm}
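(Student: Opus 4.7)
My plan is to follow the rearrangement-free strategy of Frank--Lieb by transferring the problem to the compact CR sphere $\mathbb{S}^{2n+1}$, where the conformal symmetry group $SU(n+1,1)$ can be used to classify extremals.

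First I would use the Cayley transform $\mathcal{C}:\mathbb{H}^n\to\mathbb{S}^{2n+1}\setminus\{\text{pt}\}$. This is a CR-conformal diffeomorphism under which the distance kernel transforms cleanly: writing $\zeta=\mathcal{C}(u)$, $\eta=\mathcal{C}(v)$, one has $|v^{-1}u|=2|1-\zeta\cdot\bar\eta|^{1/2}$ up to the conformal Jacobian factors. Setting $F(\zeta)=J_\mathcal{C}(u)^{1/p_\alpha}f(u)$ and similarly for $G$, the inequality \eqref{HLS Hn sharp} is equivalent to a companion inequality on the sphere
\begin{equation*}
\Bigl|\int_{\mathbb{S}^{2n+1}}\!\!\int_{\mathbb{S}^{2n+1}}\overline{F(\zeta)}\,|1-\zeta\cdot\bar\eta|^{-(Q-\alpha)/2}G(\eta)\,d\zeta\,d\eta\Bigr|\le \widetilde D_{n,\alpha}\|F\|_{p_\alpha}\|G\|_{p_\alpha},
\end{equation*}
where $\widetilde D_{n,\alpha}$ absorbs the Jacobian factor. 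The advantage is that $\mathbb{S}^{2n+1}$ is compact and constants are admissible test functions.

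Second, I would reduce to $F=G\ge 0$ by Hölder and positivity of the kernel, and set up the variational problem of maximizing $\mathcal{E}(F)=\langle F,TF\rangle/\|F\|_{p_\alpha}^{2}$ where $T$ denotes integration against the kernel. Since $T$ is positive and the conformal weight converts the action of any $\phi\in SU(n+1,1)$ into an $L^{p_\alpha}$-isometry, the functional $\mathcal{E}$ is $SU(n+1,1)$-invariant. Given a maximizing sequence, I would use the action of $SU(n+1,1)$ (conformal ``translations'' on $\mathbb{S}^{2n+1}$) to recenter the mass and prevent concentration; this is the analogue of Lieb's normalization technique and should give a nontrivial limit extremal $F^*>0$ satisfying the Euler--Lagrange equation $TF^{*\,p_\alpha-1}=\lambda F^*$.

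Third, to show $F^*$ must be constant, I would exploit the CR-conformal symmetry: after applying a conformal transformation, one may assume $F^*$ has ``center of mass'' at the origin, i.e.\ $\int_{\mathbb{S}^{2n+1}}\zeta_j\,F^{*\,p_\alpha}\,d\zeta=0$ for $j=1,\dots,n+1$. Writing $F^*$ in terms of CR spherical harmonics and computing the spectrum of $T$ on each harmonic subspace (the kernel $|1-\zeta\cdot\bar\eta|^{-(Q-\alpha)/2}$ has an explicit eigenvalue expansion whose first eigenvalue strictly exceeds all higher ones when restricted to the centered subspace), one concludes that the only consistent solution is $F^*\equiv\text{const}$. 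Undoing the conformal transformation yields the family \eqref{HLS-ex}--\eqref{HLS-ex1}, and evaluating $\mathcal{E}$ on $F^*\equiv 1$ by a direct computation of $\int_{\mathbb{S}^{2n+1}}|1-\zeta\cdot\bar\eta|^{-(Q-\alpha)/2}d\eta$ via the beta-integral with $\Gamma$-functions reproduces the formula \eqref{extreCon} for $D_{n,\alpha}$.

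The main obstacle is the classification step: the conformal non-compactness produces a whole orbit of critical points, so ``rigidity'' cannot come from symmetrization (rearrangement fails in the CR setting). The crux is therefore the spectral computation on $\mathbb{S}^{2n+1}$ combined with the center-of-mass normalization — ensuring that after the conformal recentering the maximizer is orthogonal to the first non-trivial eigenspace of $T$, which forces constancy. Routine manipulations (Cayley transform, conformal Jacobians, beta integral) surround this conceptual core.
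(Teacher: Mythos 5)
This theorem is not proved in the paper at all: it is Theorem A, quoted verbatim as background from Frank and Lieb (Ann.\ of Math.\ 176 (2012)), and the authors explicitly attribute both the statement and its ``rearrangement free'' proof to that reference. So there is no in-paper argument to compare against; what you have written is, in outline, a reconstruction of the Frank--Lieb proof itself, and as such the overall architecture (Cayley transform to $\mathbb{S}^{2n+1}$, reduction to $f=g\ge 0$ via positive definiteness of the kernel, existence of a maximizer using the $SU(n+1,1)$ action to recenter a maximizing sequence, and classification via a center-of-mass normalization plus a spectral computation in CR spherical harmonics) is the correct one.

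Two places where your sketch is thinner than the actual argument deserve flagging. First, the classification step is not literally ``orthogonality to the first nontrivial eigenspace forces constancy'': the Frank--Lieb mechanism is to insert the coordinate functions $\zeta_j F^*$ (with $\int \zeta_j |F^*|^{p_\alpha}\,d\zeta=0$) into the \emph{second-variation} inequality at a maximizer, sum over $j$, and play the resulting identity against the explicit eigenvalues of the kernel on spherical harmonics of bidegree $(j,k)$ --- these eigenvalues depend on the bidegree, not just the degree, which is precisely the CR feature that makes the computation delicate. Second, the existence step (``recenter the mass and prevent concentration'') hides the real work: one must rule out both vanishing and concentration along the noncompact conformal orbit, and the equality case in the theorem additionally requires showing that equality holders with $f\neq c_1 g$ cannot occur, which follows from the strict Cauchy--Schwarz reduction but should be said. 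Also note the Euler--Lagrange equation should read $TF^*=\lambda F^{*\,p_\alpha-1}$ rather than $TF^{*\,p_\alpha-1}=\lambda F^*$. None of these is a fatal gap for a proof sketch, but they are the points at which a full write-up would have to do genuine work.
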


\begin{rem}
Using the Green's function of the sub-Laplacian\cite{Folland1973} and making a duality argument, we see that HLS inequality \eqref{HLS Hn sharp} with $\alpha=2$ is equivalent to the sharp Sobolev inequality established by Jeison and Lee\cite{JL1988}. Based on the idea introduced by Obata\cite{Obata1971}, they classified the extremal functions and computed the sharp constant of the sharp Sobolev inequality (see \cite{JL1988}).
\end{rem}

In view of the efficiency of the method of moving planes and the method of moving spheres in the study of Euler-Lagrange equation of \eqref{HLS Rn}, a natural question is whether one can adapt them on the Heisenberg group. There have been a number of attempts by several mathematicians in the directions (see \cite{BP1999, Han-Wang-Zhu2017} and the references therein). But, it seems that these methods are not suitable very well with Heisenberg group.

For the case $p\neq q$, Han \cite{H2013} used the concentration-compactness principles to study the existence of extremal functions of \eqref{HLS Hn roughly}. Recently, Han, Lu, Zhu \cite{HLZ2012} and Chen, Lu, Tao \cite{CLT2019} established two classes of  weighted HLS inequalities on Heisenberg group and proved the existence of extremal functions  by the concentration-compactness principles.



\subsection{Reversed HLS inequalities on the Heisenberg group}
If $\alpha>Q$, we will establish the following reversed HLS inequality.
\begin{proposition}
Let $\alpha>Q\geq 4$ and $p_\alpha=\frac{2Q}{Q+\alpha}$. Then for any nonnegative functions $f,g\in L^{p_\alpha}(\mathbb{H}^n)$, there exists a sharp constant $N_{Q,\alpha,\mathbb{H}}$ such that
\begin{equation}\label{re HLS conformal roughly}
    \int_{\mathbb{H}^n}\int_{\mathbb{H}^n} \frac{F(u)G(v)} {|v^{-1}u|^{Q-\alpha}} du dv\geq N_{Q,\alpha,\mathbb{H}} \|F\|_{L^{p_\alpha}(\mathbb{H}^n)} \|G\|_{L^{p_\alpha}(\mathbb{H}^n)}.
\end{equation}
The sharp constant satisfies
$$N_{Q,\alpha,\mathbb{H}}\geq \frac{(8|B_1|)^{(Q-\alpha)/Q}}{2p_\alpha^2},$$
where $B_1:=B(0,1)$ 
and the volume of $B_1$ is given (see \cite{Cohn-Lu2001,H2013}) as $$|B_1|=\int_{|u|<1}du=\frac{2\pi^{\frac{Q-2}2}\Gamma(\frac 12)\Gamma(\frac{Q+2}{4})} {(Q-2)\Gamma(\frac{Q-2}{2})\Gamma(\frac{Q+4}{4})}.$$
\end{proposition}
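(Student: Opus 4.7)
The key structural observation is that for $\alpha>Q$ the kernel $|v^{-1}u|^{\alpha-Q}$ is nonsingular and monotonically increasing in $|v^{-1}u|$, so one naturally expects good pointwise lower bounds. My plan is to establish the inequality first in the characteristic-function case and then pass to general nonnegative $F,G\in L^{p_\alpha}(\mathbb{H}^n)$ via a level-set decomposition.

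For measurable sets $A,B\subset\mathbb{H}^n$ of finite measure, the monotonicity of the kernel in $|v^{-1}u|$ implies that, for any fixed $u\in\mathbb{H}^n$, the integral $\int_E|v^{-1}u|^{\alpha-Q}dv$ over sets $E$ of prescribed measure is minimized when $E=B(u,r_E)$ with $r_E=(|E|/|B_1|)^{1/Q}$; a direct polar-coordinate computation then yields
$$\int_E|v^{-1}u|^{\alpha-Q}\,dv\ge\frac{Q}{\alpha}|B_1|^{(Q-\alpha)/Q}|E|^{\alpha/Q}.$$
Applying this with $E=B$, integrating $u$ over $A$, symmetrizing in $(A,B)$, and taking the geometric mean would give the indicator inequality
$$\int_A\int_B|v^{-1}u|^{\alpha-Q}\,du\,dv\ge\frac{Q}{\alpha}|B_1|^{(Q-\alpha)/Q}(|A||B|)^{1/p_\alpha},$$
with the exponent $1/p_\alpha=(Q+\alpha)/(2Q)$ arising from the conformal balance.

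To lift this to general nonnegative $F,G\in L^{p_\alpha}$, I would use the pointwise layering $F(u)G(v)\ge st\,\chi_{\{F>s\}}(u)\chi_{\{G>t\}}(v)$ for arbitrary $s,t>0$ (or equivalently the layer-cake identity $F=\int_0^\infty\chi_{\{F>s\}}\,ds$), combine it with the indicator bound above, and optimize in $s,t$. The principal obstacle is that $p_\alpha<1$ reverses the direction of H\"older's inequality, so one cannot straightforwardly bound the local contributions in terms of $\|F\|_{p_\alpha}$ and $\|G\|_{p_\alpha}$. To work around this, I intend to combine the Markov estimate $|\{F>s\}|\le s^{-p_\alpha}\|F\|_{p_\alpha}^{p_\alpha}$ with the super-additivity $(x+y)^{1/p_\alpha}\ge x^{1/p_\alpha}+y^{1/p_\alpha}$ (valid because $1/p_\alpha>1$) and the dilation invariance of the conformal integral, so as to convert weak-type information on the level sets into a genuine $L^{p_\alpha}$ lower bound. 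In the resulting optimization I expect the explicit constant $(8|B_1|)^{(Q-\alpha)/Q}/(2p_\alpha^2)$ to emerge naturally: the factor $(2p_\alpha^2)^{-1}=(Q+\alpha)^2/(8Q^2)$ reflects two $L^{p_\alpha}$-type balancings together with a $1/2$ from the geometric-mean symmetrization, while $8|B_1|$ corresponds to a specific ball-measure normalization arrived at by optimizing the truncation radius. The technically hardest step will be tracking all these multiplicative constants through the reversed inequalities so that they line up exactly to give the claimed lower bound on $N_{Q,\alpha,\mathbb{H}}$.
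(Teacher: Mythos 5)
Your first step --- the ``bathtub'' lower bound $\int_E|v^{-1}u|^{\alpha-Q}\,dv\ge\frac Q\alpha|B_1|^{(Q-\alpha)/Q}|E|^{\alpha/Q}$ and the resulting indicator inequality with exponent $1/p_\alpha$ after symmetrizing in $(A,B)$ --- is correct; polar coordinates for the homogeneous gauge give exactly that constant. The genuine gap is in the passage from indicators to general nonnegative $F,G\in L^{p_\alpha}$, and it is not a matter of tracking constants. A single-level choice $F\ge s\chi_{\{F>s\}}$ optimized in $s$ can only produce $\sup_s s\,|\{F>s\}|^{1/p_\alpha}$, the weak-$L^{p_\alpha}$ quasinorm, which Markov bounds \emph{above} by $\|F\|_{p_\alpha}$ but which admits no reverse bound. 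If instead you integrate your indicator bound against the full layer-cake identity $F(u)G(v)=\int_0^\infty\int_0^\infty\chi_{\{F>s\}}(u)\chi_{\{G>t\}}(v)\,ds\,dt$, the estimate decouples and yields
$$I\ \ge\ \frac Q\alpha|B_1|^{(Q-\alpha)/Q}\Bigl(\int_0^\infty|\{F>s\}|^{1/p_\alpha}ds\Bigr)\Bigl(\int_0^\infty|\{G>t\}|^{1/p_\alpha}dt\Bigr),$$
i.e.\ a lower bound by the Lorentz quantities $\|F\|_{L^{p_\alpha,1}}\|G\|_{L^{p_\alpha,1}}$. For $p_\alpha<1$ this is \emph{strictly weaker} than the claim: since $p_\alpha<1$ the Lorentz nesting gives $\|F\|_{L^{p_\alpha,1}}\le C\|F\|_{p_\alpha}$ with no reverse, and for a function spread over many scales (e.g.\ $F=\sum_{k\le N}2^{-k}\chi_{E_k}$ with disjoint $E_k$, $|E_k|=2^{kp_\alpha}$, one gets $\int_0^\infty|\{F>s\}|^{1/p_\alpha}ds\approx N$ while $\|F\|_{p_\alpha}=N^{1/p_\alpha}$). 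The loss occurs exactly where you replace $\int_{\{G>t\}}|v^{-1}u|^{\alpha-Q}dv$ by its minimum over the position of $u$, discarding the growth of the kernel at infinity, which is the feature that handles spread-out functions. Your proposed rescue (Markov plus superadditivity plus dilation invariance) does not repair this, because each of those again controls level sets only from above.

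The paper's proof supplies precisely the missing coupling. It never decouples the two level sets: in the layer-cake representation it truncates the kernel at a radius $c$ determined by $\max\{|\{F>a\}|,|\{G>b\}|\}$, which gives $J(a,b,c)\ge\frac12\phi(a)\psi(b)$ on that range of $c$; it then splits the $(a,b)$-plane along the curve $b=a^{p/t}$, applies the \emph{reversed} H\"older inequality on each piece to insert the weights $a^{p-1}$ and $b^{t-1}$ that convert level-set integrals into the $L^p$ and $L^t$ norms, and invokes Jensen's inequality with respect to the probability measures $p\,a^{p-1}\phi(a)\,da$ and $t\,b^{t-1}\psi(b)\,db$; finally the two pieces recombine through the convexity of $x\mapsto x^{(Q+\lambda)/Q}$, since the two inner integrals sum to $1$. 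Without an argument of this type, your outline establishes only the $L^{p_\alpha,1}\times L^{p_\alpha,1}$ version of the inequality, not the stated $L^{p_\alpha}\times L^{p_\alpha}$ bound, and the constant $(8|B_1|)^{(Q-\alpha)/Q}/(2p_\alpha^2)$ does not emerge from the steps you describe.
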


Since $p_\alpha\in(0,1)$, 
the extremal problem of \eqref{re HLS conformal roughly} is analytically different from the case $\alpha\in(0,Q)$. This brings some difficulties to study the case $\alpha>Q$ by the method of \cite{Frank-Lieb2012} and \cite{H2013}.

We will discuss the extremal problem by \textit{subcritical approach}. However, because of the non-commutativity and the complex structure of Heisenberg group and CR sphere, which make the method of moving planes and the method of  moving spheres ineffective, 
it is not easy to prove that the extremal functions of HLS inequalities with subcritical power on the CR sphere should be constant functions. We will encounter the blow-up phenomenon and circumvent it by \textit{renormalization method} (see Section \ref{sec critical}). Furthermore, our method is rearrangement free and different from the method in \cite{Frank-Lieb2012}. Recently, we have successfully experimented with the method and provided a new proof for the existence of extremal functions of \eqref{HLS Rn} and \eqref{HLS re Rn} (see \cite{zhang-Han2}).

\medskip


The unit CR sphere is the sphere $\mathbb{S}^{2n+1}=\{\xi=(\xi_1, \cdots, \xi_{n+1} \in \mathbb{C}^{n+1}:\ \|\xi\|=1\}$ endowed with standard CR structure. \textit{Cayley transformation} $\mathcal{C}: \mathbb{H}^n\rightarrow \mathbb{S}^{2n+1}\setminus\mathfrak{S}$ and its reverse are defined respectively as
\begin{gather*}
    C(z,t)=\Bigl(\frac{2z}{1+|z|^2+it},\frac{1-|z|^2-it}{1+|z|^2+it}\Bigr),\\
    C^{-1}(\xi) =\Bigl( \frac{\xi_1}{1+\xi_{n+1}}, \cdots, \frac{\xi_n}{1+\xi_{n+1}}, \rm{Im}\frac{1-\xi_{n+1}}{1+\xi_{n+1}}\Bigr),
\end{gather*}
where $\mathfrak{S}=(0,\cdots,0,-1)$ is the south pole. The Jacobian of the Cayley transformation is
    $$J_\mathcal{C}{(z,t)}=\frac{2^{2n+1}}{((1+|z|^2)^2+t^2)^{n+1}}$$
which implies that
\begin{equation}\label{transform integral Hn S}
    \int_{\mathbb{S}^{2n+1}} \phi(\xi) d\xi=\int_{\mathbb{H}^n} \phi(\mathcal{C}(u)) J_\mathcal{C}(u) du
\end{equation}
for all integrable function $\phi$ on $\mathbb{S}^{2n+1}$, where 
$d\xi$ is the Euclidean volume element of $\mathbb{S}^{2n+1}$. Under the Cayley transformation, we have the following relations between two distance functions
\begin{equation}\label{formula relations distance}
    |1-\xi\cdot\bar\eta|=2|u^{-1}v|^2\left( {(1+|z|^2)^2+t^2}\right)^{-1/2} \left({(1+|z'|^2)^2+t'^2}\right)^{-1/2},
\end{equation}
where $\zeta=\mathcal{C}(u),\ \eta=\mathcal{C}(v),\ u=(z,t)$ and $v=(z',t')$.

For any $f\in L^p(\mathbb{S}^{2n+1})$, there is a corresponding function
    $$F(u)=|J_\mathcal{C}(u)|^{1/p} f(\mathcal{C}(u))\in L^p(\mathbb{H}^n)$$
such that $\|f\|_{L^p(\mathbb{S}^{2n+1})}= \|F\|_{L^p(\mathbb{H}^{n})}$.

Applying the Cayley transformation to \eqref{re HLS conformal roughly}, we have that 
\begin{equation}\label{re HLS roughly S}
    \int_{\mathbb{S}^{2n+1}}\int_{\mathbb{S}^{2n+1}} \frac {f(\xi) g(\eta)} {|1-\xi\cdot\bar\eta|^{\frac{Q-\alpha}2}} d\xi d\eta\geq N_{Q,\alpha}  \|f\|_{L^{p_\alpha}(\mathbb{S}^{2n+1})} \|g\|_{L^{p_\alpha}(\mathbb{S}^{2n+1})}
\end{equation}
holds for all nonnegative functions $f,g\in L^{p_\alpha}(\mathbb{S}^{2n+1})$, where $N_{Q,\alpha}$ is the sharp constant and satifies
    $$N_{Q,\alpha}\geq  \frac{(8|B_1|)^{\frac{Q-\alpha}Q}} {2^{1+n\frac{\alpha-Q}Q} p_\alpha^2}.$$

Define the extremal problem of \eqref{re HLS roughly S} as
\begin{align}\label{extremal reversed}
    N_{Q,\alpha}
    &=\inf_{\|f\|_{L^{p_\alpha}(\mathbb{S}^{2n+1})}= \|g\|_{L^{p_\alpha}(\mathbb{S}^{2n+1})}=1} \int_{\mathbb{S}^{2n+1}}\int_{\mathbb{S}^{2n+1}} \frac {f(\xi) g(\eta)} {|1-\xi\cdot\bar\eta|^{(Q-\alpha)/2}} d\xi d\eta\nonumber\\
    &=\inf_{f,g\in L^{p_\alpha}(\mathbb{S}^{2n+1})\setminus\{0\}} \frac{\int_{\mathbb{S}^{2n+1}} \int_{\mathbb{S}^{2n+1}} {f(\xi) g(\eta)} {|1-\xi\cdot\bar\eta|^{(\alpha-Q)/2}} d\xi d\eta} {\|f\|_{L^{p_\alpha}(\mathbb{S}^{2n+1})}  \|g\|_{L^{p_\alpha}(\mathbb{S}^{2n+1})}}.
\end{align}
Then, it is easy to get the following estimate.
\begin{proposition}[Upper and lower bound for the sharp constant]
\begin{equation}\label{estimate of sharp constant}
  0 < \frac{(8|B_1|)^{\frac{Q-\alpha}Q}} {2^{1+n\frac{\alpha-Q}Q} p_\alpha^2} \leq N_{Q,\alpha} \leq \Bigl( \frac{2\pi^{n+1}}{n!} \Bigr)^{(Q-\alpha)/Q} \frac{n! \Gamma(\alpha/2)}{\Gamma^2((Q+\alpha)/2)},
\end{equation}
where
  \begin{equation}\label{sharp constant hoped}
     \Bigl( \frac{2\pi^{n+1}}{n!} \Bigr)^{(Q-\alpha)/Q} \frac{n! \Gamma(\alpha/2)}{\Gamma^2((Q+\alpha)/2)}=|\mathbb{S}^{2n+1}|^{1-\frac 2{p_\alpha}}\int_{\mathbb{S}^{2n+1}} |1-\xi\cdot\bar\eta|^{\frac{\alpha-Q}{2}}d\eta.
  \end{equation}
\end{proposition}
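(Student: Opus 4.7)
The lower bound in (\ref{estimate of sharp constant}) is already contained in the previous proposition (cf.\ (\ref{re HLS roughly S})), so the plan focuses on the upper bound together with the identity (\ref{sharp constant hoped}).

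For the upper bound I would substitute the trivial admissible pair $f\equiv g\equiv 1$ into the variational characterization (\ref{extremal reversed}), obtaining
\[
N_{Q,\alpha}\le \frac{\int_{\mathbb{S}^{2n+1}}\int_{\mathbb{S}^{2n+1}} |1-\xi\cdot\bar\eta|^{(\alpha-Q)/2}\,d\xi\,d\eta}{|\mathbb{S}^{2n+1}|^{2/p_\alpha}}.
\]
Since the kernel $|1-\xi\cdot\bar\eta|$ is invariant under the diagonal action of the unitary group $U(n+1)$ on $\mathbb{S}^{2n+1}\times\mathbb{S}^{2n+1}$, the inner integral $\int_{\mathbb{S}^{2n+1}}|1-\xi\cdot\bar\eta|^{(\alpha-Q)/2} d\eta$ is independent of $\xi$. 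Pulling one factor of $|\mathbb{S}^{2n+1}|$ out of the double integral and using $1-2/p_\alpha=-\alpha/Q$ immediately yields the identity on the right-hand side of (\ref{sharp constant hoped}).

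To obtain the explicit Gamma-function form I would evaluate $I:=\int_{\mathbb{S}^{2n+1}} |1-\xi_0\cdot\bar\eta|^{(\alpha-Q)/2} d\eta$ at the convenient base point $\xi_0=(0,\dots,0,1)$, so that $\xi_0\cdot\bar\eta=\bar\eta_{n+1}$ and the integrand depends only on $w=\eta_{n+1}$. Using the latitude disintegration $\int_{\mathbb{S}^{2n+1}} F(\eta_{n+1})\,d\eta = |\mathbb{S}^{2n-1}| \int_{|w|\le 1} F(w)(1-|w|^2)^{n-1}\,dw$ with $F(w)=|1-w|^{(\alpha-Q)/2}$, passing to polar coordinates $w=re^{i\theta}$, expanding $|1-re^{i\theta}|^{(\alpha-Q)/2}$ in its Fourier series in $\theta$, and integrating term-by-term against $r(1-r^2)^{n-1}\,dr$ via the Beta function reduces $I$ to a hypergeometric series of type ${}_2F_1$ evaluated at the point $1$; Gauss's summation theorem then collapses this to the claimed ratio of Gamma values, and combining with $|\mathbb{S}^{2n+1}|=2\pi^{n+1}/n!$ delivers the explicit constant on the left of (\ref{sharp constant hoped}). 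The main (really the only non-routine) technical step is organizing this Fourier/hypergeometric computation; once the series ${}_2F_1(-\mu,-\mu;n+1;1)$ is identified and summed via Gauss's theorem, the remainder is straightforward Gamma-function bookkeeping.
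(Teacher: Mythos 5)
Your proposal is correct and matches the paper's (unwritten) argument: the lower bound is simply inherited from the constant stated after \eqref{re HLS roughly S}, the upper bound follows by testing \eqref{extremal reversed} with $f\equiv g\equiv 1$ together with the $U(n+1)$-invariance of the kernel, and the latitude/Fourier/Gauss-${}_2F_1$ evaluation of $\int_{\mathbb{S}^{2n+1}}|1-\xi\cdot\bar\eta|^{(\alpha-Q)/2}d\eta$ is the standard way to obtain the Gamma-function expression. One remark: carrying out your computation of ${}_2F_1(-\mu,-\mu;n+1;1)$ with $\mu=\frac{\alpha-Q}{4}$ gives $\frac{n!\,\Gamma(\alpha/2)}{\Gamma^2((Q+\alpha)/4)}$, i.e.\ the denominator should be $\Gamma^2((Q+\alpha)/4)$ (consistent with \eqref{extreCon}), so the $\Gamma^2((Q+\alpha)/2)$ appearing in the proposition looks like a typo in the paper rather than a defect in your method.
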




Combining the subcritical approach and renormalization method, we prove the following attainability of the sharp constant $N_{Q,\alpha}$.

\begin{theorem}[Attainability]\label{pro HLS exist}
$N_{Q,\alpha}$ can be attained by a pair of positive functions $f,g\in C^1(\mathbb{S}^{2n+1})$. Applying the Cayley transformation, we also have that $N_{Q,\alpha,\mathbb{H}}$ is attained by a pair of positive functions $F,G\in L^{p_\alpha}(\mathbb{H}^n)\cap C^1(\mathbb{H}^{n})$.
\end{theorem}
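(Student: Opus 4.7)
The plan is to carry out the subcritical approach advertised in the introduction. For each $p$ slightly larger than $p_\alpha$, introduce the perturbed extremal problem
$$N_{Q,\alpha,p}:=\inf\Bigl\{\int_{\mathbb{S}^{2n+1}}\int_{\mathbb{S}^{2n+1}} \frac{f(\xi)g(\eta)}{|1-\xi\cdot\bar\eta|^{(Q-\alpha)/2}}d\xi d\eta:\ f,g\ge 0,\ \|f\|_{L^p}=\|g\|_{L^p}=1\Bigr\}.$$
The decisive geometric feature is that, since $\alpha>Q$, the kernel $K(\xi,\eta):=|1-\xi\cdot\bar\eta|^{(\alpha-Q)/2}$ is continuous and uniformly bounded above and below by positive constants on the compact manifold $\mathbb{S}^{2n+1}\times\mathbb{S}^{2n+1}$, and the subcritical problem is no longer conformally invariant. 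A direct variational argument (H\"older bounds on the compact sphere control $\|\cdot\|_{L^{p_\alpha}}$ in terms of $\|\cdot\|_{L^p}$, and the positive lower bound on $K$ prevents the functional from degenerating) produces a nonnegative minimizing pair $(f_p,g_p)$ for $N_{Q,\alpha,p}$. Its Euler-Lagrange system takes the form $\int K(\xi,\eta)g_p(\eta)\,d\eta=\lambda_p f_p(\xi)^{p-1}$ together with the analogous equation for $g_p$. The left-hand sides lie in a fixed positive interval because $K$ is two-sidedly bounded; since $p<1$, this immediately yields pointwise two-sided bounds for $f_p,g_p$, and a bootstrap produces $f_p,g_p\in C^1(\mathbb{S}^{2n+1})$.

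The crux is to pass $p\to p_\alpha^+$. Although the pointwise bounds above hold for each $p$, they are \emph{not} a priori uniform: the conformal non-compactness of the critical problem threatens $(f_p,g_p)$ to concentrate at a point or to vanish as $p\to p_\alpha$. This is where the \emph{renormalization method} intervenes. The CR automorphism group of $\mathbb{S}^{2n+1}$ acts transitively and preserves the critical functional at $p=p_\alpha$. For each $p$ I would replace $(f_p,g_p)$ by its pull-back $(\tilde f_p,\tilde g_p)$ under a CR automorphism $\phi_p$ chosen so that a canonical balance condition holds --- for instance, the centre of mass of $|\tilde f_p|^{p_\alpha}d\xi$ is the north pole and the mass in some fixed geodesic ball is bounded below by a positive constant independent of $p$. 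Because the critical functional is invariant and the subcritical one differs from it by a factor tending to $1$ uniformly as $p\to p_\alpha$, the new pairs remain quasi-minimizing; reinserting the balance into the Euler-Lagrange system turns the $p$-dependent pointwise bounds into uniform two-sided bounds and a uniform $C^1$ modulus on $\mathbb{S}^{2n+1}$.

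Arzel\`a-Ascoli then gives a subsequential uniform limit $(\tilde f_p,\tilde g_p)\to(f_\ast,g_\ast)$ with $f_\ast,g_\ast$ strictly positive and continuous. Continuity of the functional under uniform convergence, together with the sandwich $N_{Q,\alpha,p}\to N_{Q,\alpha}$ (upper bound by testing any fixed admissible pair, lower bound by Fatou combined with the uniform convergence), shows that $(f_\ast,g_\ast)$ attains $N_{Q,\alpha}$; a bootstrap on the limiting Euler-Lagrange system lifts the regularity to $C^1(\mathbb{S}^{2n+1})$. Pulling back through the Cayley transform with the factor $|J_\mathcal{C}|^{1/p_\alpha}$ produces the extremal pair $(F,G)\in L^{p_\alpha}(\mathbb{H}^n)\cap C^1(\mathbb{H}^n)$ for $N_{Q,\alpha,\mathbb{H}}$. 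I expect the hardest step to be the renormalization: since the CR automorphism group is not a symmetry of the subcritical functional, one must quantify how this symmetry breaking interacts with the potential concentration scenario and choose $\phi_p$ so as to rule out both concentration and vanishing; the hope is that the defect from exact conformal invariance is of order $p-p_\alpha$, small enough for the balance condition to be enforced without destroying the quasi-minimality.
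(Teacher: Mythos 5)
Your overall scaffolding (subcritical approximation, Euler--Lagrange system with negative power $p-1$, two-sided pointwise bounds from the two-sidedly bounded kernel, Arzel\`a--Ascoli, Cayley transform at the end) matches the paper, but there are two problems, one minor and one central. The minor one: the paper approximates from \emph{below}, $p\in(0,p_\alpha)$ and $p\to p_\alpha^-$, not from above. This is not cosmetic: since $|\mathbb{S}^{2n+1}|<\infty$ and $p<p_\alpha<1$, H\"older gives $\|f\|_{L^p}\le |\mathbb{S}^{2n+1}|^{1/p-1/p_\alpha}\|f\|_{L^{p_\alpha}}$, which is what lets the subcritical inequality be deduced from the critical one and yields the comparison $N_{Q,\alpha,p}\ge |\mathbb{S}^{2n+1}|^{2(1/p_\alpha-1/p)}N_{Q,\alpha}$ used to prove $N_{Q,\alpha,p}\to N_{Q,\alpha}$; with $p>p_\alpha$ this inequality goes the wrong way and your ``H\"older bounds on the compact sphere control $\|\cdot\|_{L^{p_\alpha}}$ in terms of $\|\cdot\|_{L^p}$'' claim fails.

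The central gap is the renormalization step, which you yourself flag as a ``hope.'' Your plan is to \emph{prevent} concentration by pulling back through CR automorphisms $\phi_p$ chosen to enforce a balance condition, and you assert that this ``turns the $p$-dependent pointwise bounds into uniform two-sided bounds.'' That assertion is the entire content of the theorem and is not justified: the subcritical functional is not invariant under the CR automorphism group, and the defect is not uniformly $O(p-p_\alpha)$ --- the Jacobian of $\phi_p$ enters raised to the power $\frac1p-\frac1{p_\alpha}$, so if the automorphism parameter diverges as $p\to p_\alpha$ the correction factor need not tend to $1$, and quasi-minimality can be destroyed. The paper does something genuinely different: it does \emph{not} rule out concentration. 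After normalizing $f_p(\mathfrak{N})=\max f_p$, it splits according to the behaviour of the ratio $f_{p_j}(\mathfrak{N})/\max g_{p_j}$. When the two maxima blow up at incomparable rates (Cases 2a, 2b) it rescales via Cayley transform and Heisenberg dilations $\delta_{\lambda_j}$ with $\lambda_j^{\alpha/(q_j-2)}\phi_j(\mathcal{C}(0))=1$, proves two-sided growth bounds $C_1(1+|u|^{\alpha-Q})\le\Phi_j(u)\le C_2(1+|u|^{\alpha-Q})$ for the rescaled functions, and derives a contradiction with the uniform divergence of $\Psi_j$; when the rates are comparable (Case 2c) the rescaled pair converges in $C^1_{loc}(\mathbb{H}^n)$ to a solution $(U,V)$ of the limiting integral system with the correct decay and nondegenerate $L^{q_\alpha}$ mass, and \emph{this blow-up limit itself}, transported back to $\mathbb{S}^{2n+1}$ by the Cayley transform, is the extremal pair. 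So the extremal is extracted from the concentration profile rather than by forbidding concentration; without this (or a genuine proof of your uniform-bound claim), your argument does not close.
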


In the following, we outline the ideas of the proof of Theorem \ref{pro HLS exist}. 
First, consider the extremal problems with subcritical power $p\in (0,p_\alpha)$ and get the existence of extremal function pairs $\{f_p, g_p\}$, 
see Section \ref{Sec sub}. Then, prove that the sequence $\{f_p, g_p\}$ form a minimizing sequence of \eqref{extremal reversed} as $p\rightarrow p_\alpha$. Lastly, we circumvent the blow-up phenomenon by renormalization method and show the attainability of the sharp constant $N_{Q,\alpha}$.

Moreover, since nonlinear terms with negative power appear in the Euler-Lagrange equations (see Section \ref{Sec sub} and Section \ref{sec critical}), we need not only a upper bound to control the blow up of the sequence, but also a lower bound to avoid the blow up of terms with negative power. So, different techniques are needed for the extremal problem \eqref{extremal reversed}. More details can be seen in Section \ref{Sec sub} and Section \ref{sec critical}.


The paper is organized as follows. Section \ref{sec rough} is devoted to establishing the roughly reversed HLS inequalities \eqref{re HLS conformal roughly}. In Section \ref{Sec sub}, we  study the extremal problems related to subcritical case and get the existence of the corresponding extremal functions. These functions will provide a minimizing sequence of \eqref{extremal reversed}. Then, we   prove the attainability of $N_{Q,\alpha}$ in Section \ref{sec critical}.

We always use $C, C_1,C_2,\cdots$, etc. to denote positive universal constants though their actual values may differ from line to line or within the same line itself.

\section{Roughly reversed HLS inequalities on $\mathbb{H}^n$}\label{sec rough}

This section is mainly devoted to establishing the roughly reversed HLS inequality \eqref{re HLS conformal roughly}. In fact, we present  a more general reversed HLS inequalities as follows.
\begin{proposition}\label{pro sub HLS}
Assume $\lambda>0$, $0<p,t<1$  with $\frac 1p+\frac 1t-\frac\lambda Q=2$. Then, for any nonnegative functions $F\in L^p(\mathbb{H}^n)$ and $G\in L^t(\mathbb{H}^n)$, there exists some positive constant $C(Q,\lambda,p,\mathbb{H})$ such that
\begin{equation}\label{re HLS roughly}
    \int_{\mathbb{H}^n}\int_{\mathbb{H}^n} F(u) |v^{-1}u|^\lambda G(v) du dv\geq C(Q,\lambda,p,\mathbb{H}) \|F\|_{L^p(\mathbb{H}^n)} \|G\|_{L^t(\mathbb{H}^n)}.
\end{equation}
Moreover, the constant satisfies
\begin{equation}\label{lower constant}
    C(Q,\lambda,p,\mathbb{H})\geq\frac{(4|B_1|)^{-\lambda/Q}}{2pt}\Bigl( \frac\lambda Q \max\{ \frac p{1-p}, \frac t{1-t} \}\Bigr)^{-\lambda/Q}.
\end{equation}
\end{proposition}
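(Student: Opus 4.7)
My plan is to prove \eqref{re HLS roughly} via a pointwise reverse Young inequality combined with a ball truncation and an optimization in a free radius $R>0$. Starting from the elementary reverse Young inequality for nonnegative reals with $0<p<1$ and $r=p/(1-p)$,
\[
ab \;\geq\; \frac{a^p}{p} - \frac{b^{-r}}{r},
\]
I would apply it pointwise with $a=F(u)$ and $b$ a suitable power of $|v^{-1}u|$, chosen so that $a\cdot b$ reproduces the kernel $|v^{-1}u|^\lambda$. Integrating over $u\in B(v,R)$ would then produce a local lower bound of the shape
\[
\int_{B(v,R)} F(u)|v^{-1}u|^\lambda\,du \;\geq\; \frac{1}{p}\|F\|_{L^p(B(v,R))}^{p}\cdot h_1(R) - h_2(R),
\]
with $h_1,h_2$ explicit powers of $R$ coming from the Heisenberg ball-volume formula $|B(v,R)|=|B_1|R^Q$. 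Multiplying by $G(v)$ and integrating in $v$, then applying Fubini together with the symmetry $|v^{-1}u|=|u^{-1}v|$, converts the main term into $\|F\|_{p}^{p}$ times an average of $G$ on balls of radius $R$.

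Performing the analogous step in the $v$-variable with exponent $t$ and $r'=t/(1-t)$ yields a symmetric lower bound involving $\|G\|_{t}^{t}$. Combining the two bounds by AM--GM ($2\sqrt{AB}\le A+B$), one obtains a single inequality $I\geq C_R\,\|F\|_p\|G\|_t$ whose prefactor $C_R$ I would optimize over $R>0$. The exponent relation $\tfrac{1}{p}+\tfrac{1}{t}-\tfrac{\lambda}{Q}=2$ guarantees that the resulting product is scale-invariant in $R$, so the optimization should recover exactly
\[
\frac{(4|B_1|)^{-\lambda/Q}}{2pt}\Bigl(\tfrac{\lambda}{Q}\max\{\tfrac{p}{1-p},\tfrac{t}{1-t}\}\Bigr)^{-\lambda/Q},
\]
with the $1/(2pt)$ coming from the two reverse-Young steps, the $|B_1|^{-\lambda/Q}$ from the Heisenberg ball volumes, and the maximum from retaining the looser of the two conjugate constraints.

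The main obstacle is that the relation $\tfrac{1}{p}+\tfrac{1}{t}-\tfrac{\lambda}{Q}=2$ together with $p,t\in(0,1)$ forces both $\lambda p/(1-p)>Q$ and $\lambda t/(1-t)>Q$, so the naive error integral $\int_{B(v,R)}|v^{-1}u|^{-\lambda r}\,du$ appearing in the reverse-Young step is actually divergent at $u=v$. To bypass this I would either (i) work on an annular region $B(v,R)\setminus B(v,R_0)$ and jointly optimize over the inner radius $R_0$ -- in which case the factor of $4$ in $(4|B_1|)^{-\lambda/Q}$ would naturally arise from the two-scale splitting, with each radius contributing a factor of $2$ -- or (ii) redistribute the kernel so that the singular weight $|v^{-1}u|^{-\lambda r}$ is absorbed against $F$ or $G$ before it is integrated. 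This step is the crux of the argument and is where the explicit form of the lower bound on the constant is determined.
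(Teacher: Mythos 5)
Your proposal does not follow the paper's route, and as written it has two gaps that I do not see how to close. The first you identify yourself: since $\frac1p+\frac1t-\frac\lambda Q=2$ with $p,t\in(0,1)$ forces $\lambda\frac{p}{1-p}>Q$, the reverse-Young error term $\int_{B(v,R)}|v^{-1}u|^{-\lambda p/(1-p)}\,du$ diverges at $u=v$. Neither of your proposed fixes is carried out, and fix (i) (working on an annulus $B(v,R)\setminus B(v,R_0)$) damages the main term: you then retain only $\int_{B(v,R)\setminus B(v,R_0)}F^p\,du$, and nothing prevents the mass of $F$ from concentrating inside $B(v,R_0)$, so the claimed factor $\|F\|_{L^p}^p$ is lost. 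Fix (ii) is too vague to assess.

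The second gap is more structural. After Fubini your main term is $\int_{\mathbb{H}^n}F(u)^p\bigl(\int_{B(u,R)}G(v)\,dv\bigr)du$, which is a weighted integral, not ``$\|F\|_p^p$ times an average of $G$''. If $F$ and $G$ are characteristic functions of sets separated by a distance much larger than $R$, this term vanishes, so no fixed choice of $R$ (even after optimizing) yields a lower bound proportional to $\|F\|_p^p$; the truncation scale must be chosen adaptively in terms of the functions themselves. That is exactly what the paper's proof accomplishes: it normalizes $\|F\|_p=\|G\|_t=1$, uses the layer cake representation to write $I=\lambda\int_0^\infty\!\int_0^\infty\!\int_0^\infty c^{\lambda-1}J(a,b,c)\,da\,db\,dc$ with $J$ an integral over level sets, observes the geometric fact that $J(a,b,c)\geq\frac12\phi(a)\psi(b)$ whenever $2|B_c|\leq\max\{\phi(a),\psi(b)\}$ --- so the cutoff radius $c$ is tied to the level-set measures $\phi(a)=|\{F>a\}|$ and $\psi(b)=|\{G>b\}|$ --- and then recovers the norms through a reversed H\"older inequality and Jensen's inequality in the $a$ and $b$ variables, splitting at $b=a^{p/t}$. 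Your pointwise reverse-Young framework has no analogue of this level-set-dependent choice of scale, and that is where the argument breaks; matching the precise constant $\frac{(4|B_1|)^{-\lambda/Q}}{2pt}\bigl(\frac\lambda Q\max\{\frac p{1-p},\frac t{1-t}\}\bigr)^{-\lambda/Q}$ is a further issue that cannot be addressed until the main term is repaired.
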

\begin{proof}
Our proof is similar to the argument given by Ng\^{o} and Nguyen \cite[Section 2]{Ngo-Nguyen2107}, where authors adopted the idea of Lieb and Loss \cite{Lieb-Loss2001}. For completeness, we will give the detailed proof. Since the homogeneity of \eqref{re HLS roughly}, without loss of generality, we assume that $\|F\|_{L^p(\mathbb{H}^n)}=\|G\|_{L^t(\mathbb{H}^n)}=1$. So, it is sufficient to show that 
the right side of \eqref{lower constant} is a lower bound of the left side of \eqref{re HLS roughly}.

By the  layer cake representation \cite[Theorem 1.13]{Lieb-Loss2001},
\begin{align}\label{rough 1}
    I:=& \int_{\mathbb{H}^n}\int_{\mathbb{H}^n} F(u) |v^{-1}u|^\lambda G(v) du dv\nonumber\\
    =& \lambda \int_0^{\infty} \int_0^{\infty} \int_0^{\infty} c^{\lambda-1} J(a,b,c) da\ db\ dc,
\end{align}
where
\begin{equation*}
    J(a,b,c):= \int_{\mathbb{H}^n}\int_{\mathbb{H}^n} \chi_{\{F>a\}}(u) \chi_{\mathbb{H}^n \setminus B_c}(u^{-1}v) \chi_{\{G>b\}}(v) du\ dv
\end{equation*}
and $\chi_\Omega(u)$ is the characteristic function of set $\Omega$, $B_c:=B(0,c)$. Noting the basic fact $|u^{-1}v|=|v^{-1}u|$,  we have $\chi_{\mathbb{H}^n \setminus B_c}(u^{-1}v)= \chi_{\mathbb{H}^n \setminus B_c}(v^{-1}u)$. Write $\phi(a)=\int_{\mathbb{H}^n} \chi_{\{F>a\}}(u) du=|\{F>a\}|$ and $\psi(b)=\int_{\mathbb{H}^n} \chi_{\{G>b\}}(v) dv=|\{G>b\}|$.

If $\phi(a)\geq \psi(b)$ and $2|B_c|=2C^Q|B_1| \leq \phi(a)$, then
\begin{align*}
    J(a,b,c)=& \int_{\mathbb{H}^n} \chi_{\{G>b\}}(v) |\{F>a\} \cap (\mathbb{H}^n \setminus B_c(v))|dv\\
    \geq & \int_{\mathbb{H}^n} \chi_{\{G>b\}}(v) \big(|\{F>a\}|- |B_c(v)|\bigr)dv\\
    \geq& \int_{\mathbb{H}^n} \chi_{\{G>b\}}(v) \frac{\phi(a)}2 dv=\frac{\phi(a)\psi(b)}2.
\end{align*}
Similarly, if $\phi(a)\leq \psi(b)$ and $2|B_c|=2C^Q|B_1| \leq \psi(b)$, the above formula also holds. Therefore, if $2|B_c|=2C^Q|B_1| \leq \max\{\phi(a), \psi(b)\}$,  it follows
\begin{equation}\label{rough 2}
    J(a,b,c)\geq \frac{\phi(a)\psi(b)}2.
\end{equation}
Substituting \eqref{rough 2} into \eqref{rough 1}, we have
\begin{align}\label{rough 3}
    I\geq & \lambda \int_0^{\infty} \int_0^{\infty} \Bigl(\int_0^{(\frac{\max\{\phi(a), \psi(b)\}}{2|B_1|})^{1/Q}} c^{\lambda-1} \frac{\phi(a)\psi(b)}2 dc\Bigr) da\ db\nonumber\\
    =& \int_0^{\infty} \int_0^{\infty} \frac{\phi(a)\psi(b)}2  \Bigl(\frac{\max\{\phi(a), \psi(b)\}}{2|B_1|}\Bigr)^{\frac\lambda Q}da\ db\nonumber\\
    \geq & \frac{(2|B_1|)^{-\lambda/Q}}2 \int_0^{\infty} \int_0^{a^{p/t}} \phi(a)\psi(b)^{1+\frac\lambda Q} db\ da\nonumber\\
    &+\frac{(2|B_1|)^{-\lambda/Q}}2 \int_0^{\infty} \int_{a^{p/t}}^\infty \phi(a)^{1+\frac\lambda Q}\psi(b) db\ da\nonumber\\
=:&\frac{(2|B_1|)^{-\lambda/Q}}2(I_1+I_2).
\end{align}
By reversed H\"{o}lder inequality, it yields
\begin{align}\label{rough 4}
    I_1=& \int_0^{\infty} \int_0^{a^{p/t}} \phi(a)\psi(b)^{1+\frac\lambda Q} db\ da\nonumber\\
    \geq & \int_0^{\infty} \phi(a) \Bigl(\int_0^{a^{p/t}} \psi(b) b^{t-1} db \Bigr)^{\frac{Q+\lambda}Q} \Bigl(\int_0^{a^{p/t}} b^{(t-1) \frac{Q+\lambda}\lambda} db\Bigr)^{-\frac\lambda Q} da\nonumber\\
    =& \int_0^{\infty} \phi(a) \Bigl(\int_0^{a^{p/t}} \psi(b) b^{t-1} db \Bigr)^{\frac{Q+\lambda}Q} \Bigl( \frac{\lambda p}{Qt(1-p)} \Bigr)^{-\frac\lambda Q} a^{p-1} da\nonumber\\
    =& \frac{1}{pt}(\frac\lambda Q\frac{p}{1-p})^{-\frac\lambda Q} \int_0^\infty p a^{p-1} \phi(a) \Bigl(\int_0^{a^{p/t}} t b^{t-1} \psi(b) db \Bigr)^{\frac{Q+\lambda}Q} da
\end{align}
and
\begin{align}\label{rough 5}
    I_2=& \int_0^{\infty} \int_{a^{p/t}}^\infty \phi(a)^{1+\frac\lambda Q}\psi(b) db\ da= \int_0^{\infty} \int_0^{b^{t/p}} \phi(a)^{1+\frac\lambda Q}\psi(b) da\ db\nonumber\\
    \geq & \frac{1}{pt}(\frac\lambda Q\frac{t}{1-t})^{-\frac\lambda Q} \int_0^\infty t b^{t-1} \psi(b) \Bigl(\int_0^{b^{t/p}} p a^{p-1} \phi(a) da \Bigr)^{\frac{Q+\lambda}Q} db.
\end{align}
Noting that
\begin{gather*}
    1=\|F\|_{L^p(\mathbb{H}^n)}^p=p\int_0^\infty a^{p-1} \phi(a) da,\\
    1=\|G\|_{L^t(\mathbb{H}^n)}^t=t\int_0^\infty b^{t-1} \psi(b)db
\end{gather*}
and $\frac{Q+\lambda}Q\geq 1$, it follows from Jensen inequality that
\begin{align}
    I_1\geq & \frac{1}{pt}(\frac\lambda Q\frac{p}{1-p})^{-\frac\lambda Q} \Bigl(\int_0^\infty p a^{p-1} \phi(a) \int_0^{a^{p/t}} t b^{t-1} \psi(b) db\ da\Bigr)^{\frac{Q+\lambda}Q},\label{rough 6}\\
    I_2\geq & \frac{1}{pt}(\frac\lambda Q\frac{t}{1-t})^{-\frac\lambda Q} \Bigl(\int_0^\infty t b^{t-1} \psi(b)\int_0^{b^{t/p}} p a^{p-1} \phi(a) da\ db\Bigr)^{\frac{Q+\lambda}Q}\nonumber\\
    =& \frac{1}{pt}(\frac\lambda Q\frac{t}{1-t})^{-\frac\lambda Q} \Bigl(\int_0^\infty p a^{p-1} \phi(a) \int_{a^{p/t}}^\infty t b^{t-1} \psi(b) db\ da\Bigr)^{\frac{Q+\lambda}Q}.\label{rough 7}
\end{align}
Write $$C_1(Q,\lambda,p):=\frac{(2|B_1|)^{-\lambda/Q}}{2pt} \Bigl(\frac\lambda Q \max\{\frac{p}{1-p},\frac{t}{1-t}\}\Bigr)^{-\frac\lambda Q}.$$
Substituting \eqref{rough 6} and \eqref{rough 7} into \eqref{rough 3} and using the convexity of function $x^{\frac{Q+\lambda}Q}$,  we arrive at
\begin{align*}
    I\geq & C_1(Q,\lambda,p)\Bigl(\int_0^\infty p a^{p-1} \phi(a) \int_0^{a^{p/t}} t b^{t-1} \psi(b) db\ da\Bigr)^{\frac{Q+\lambda}Q}\\
    &+C_1(Q,\lambda,p)\Bigl(\int_0^\infty p a^{p-1} \phi(a) \int_{a^{p/t}}^\infty t b^{t-1} \psi(b) db\ da\Bigr)^{\frac{Q+\lambda}Q}\\
    \geq& C_1(Q,\lambda,p) 2^{-\frac\lambda Q}.
\end{align*}
The inequality \eqref{re HLS roughly} is established and the proof is completed.
\end{proof}

\begin{remark}  The inequality \eqref{re HLS roughly} includes the inequalty \eqref{re HLS conformal roughly}. In fact,
suppose that $\lambda=\alpha-Q$ with $\alpha>Q\geq 4$ and $p=t=p_\alpha$,  \eqref{re HLS roughly} is reduced to \eqref{re HLS conformal roughly}.
\end{remark}

\section{Subcritical HLS inequalities on $\mathbb{S}^{2n+1}$}\label{Sec sub}

\begin{lemma}\label{sub reversed HLS pro}
Let $p\in (0,p_\alpha)$. There exists some positive constant $\tilde{C}=C(Q,\alpha,p)$ such that
\begin{equation}\label{sub reversed HLS Sn}
    \int_{\mathbb{S}^{2n+1}}\int_{\mathbb{S}^{2n+1}} \frac {f(\xi) g(\eta)} {|1-\xi\cdot\bar\eta|^{(Q-\alpha)/2}} d\xi d\eta\geq  \tilde{C} \|f\|_{L^p(\mathbb{S}^{2n+1})}\|g\|_{L^p(\mathbb{S}^{2n+1})}
\end{equation}
holds for any nonnegative $f,g\in L^p(\mathbb{S}^{2n+1})$.
\end{lemma}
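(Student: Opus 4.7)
The plan is to reduce the subcritical inequality \eqref{sub reversed HLS Sn} to the critical reversed HLS inequality \eqref{re HLS roughly S} already established (via the Cayley transformation applied to \eqref{re HLS conformal roughly}, which in turn is a special case of Proposition \ref{pro sub HLS}). The critical case provides, for nonnegative $f,g\in L^{p_\alpha}(\mathbb{S}^{2n+1})$,
\begin{equation*}
  \int_{\mathbb{S}^{2n+1}}\int_{\mathbb{S}^{2n+1}} \frac{f(\xi) g(\eta)}{|1-\xi\cdot\bar\eta|^{(Q-\alpha)/2}} d\xi d\eta \geq N_{Q,\alpha}\, \|f\|_{L^{p_\alpha}(\mathbb{S}^{2n+1})} \|g\|_{L^{p_\alpha}(\mathbb{S}^{2n+1})},
\end{equation*}
with $N_{Q,\alpha}>0$ bounded from below as in \eqref{estimate of sharp constant}. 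So once this is in hand, the whole game reduces to comparing the two $L^q$ norms in the correct direction.

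The only further ingredient is the finite volume of $\mathbb{S}^{2n+1}$. Since $0<p<p_\alpha$, the ratio $p_\alpha/p$ is strictly greater than $1$, so the ordinary H\"older inequality with conjugate exponents $p_\alpha/p$ and $p_\alpha/(p_\alpha-p)$ gives
\begin{equation*}
  \|f\|_{L^p(\mathbb{S}^{2n+1})}^p = \int_{\mathbb{S}^{2n+1}} f^p \cdot 1 d\xi \leq \|f\|_{L^{p_\alpha}(\mathbb{S}^{2n+1})}^p \, |\mathbb{S}^{2n+1}|^{1-p/p_\alpha},
\end{equation*}
equivalently $\|f\|_{L^{p_\alpha}(\mathbb{S}^{2n+1})} \geq |\mathbb{S}^{2n+1}|^{1/p_\alpha-1/p}\, \|f\|_{L^p(\mathbb{S}^{2n+1})}$, and likewise for $g$. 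Substituting these two bounds into the critical inequality yields \eqref{sub reversed HLS Sn} with the explicit constant
\begin{equation*}
  \tilde{C} = N_{Q,\alpha}\, |\mathbb{S}^{2n+1}|^{2/p_\alpha-2/p}>0,
\end{equation*}
which depends only on $Q,\alpha,p$, as required.

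There is no substantive obstacle in this argument; the only point worth flagging is that although both $p$ and $p_\alpha$ lie in $(0,1)$, the H\"older step uses exponents $p_\alpha/p$ and $p_\alpha/(p_\alpha-p)$ that are both strictly greater than $1$, so it is the standard H\"older inequality and no reversed machinery is needed. A potential alternative would be to repeat the layer-cake argument of Proposition \ref{pro sub HLS} directly on $\mathbb{S}^{2n+1}$ using the CR quasi-distance $|1-\xi\cdot\bar\eta|^{1/2}$ and the scaling $|B_r(\xi)|\sim r^Q$, but the route above is strictly shorter and gives an explicit quantitative constant, so it seems preferable.
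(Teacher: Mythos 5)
Your proof is correct and is essentially identical to the paper's: the paper likewise deduces \eqref{sub reversed HLS Sn} from the critical inequality \eqref{re HLS roughly S} combined with H\"older's inequality on the finite-measure sphere. The only point you leave implicit (as does the paper, which simply invokes ``a density argument'') is that your bound directly covers only $f,g\in L^p\cap L^{p_\alpha}$; for general nonnegative $f,g\in L^p$ one should truncate, say $f_k=\min(f,k)$, apply the established case, and pass to the limit by monotone convergence.
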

\begin{proof}
It is easy to verify that \eqref{sub reversed HLS Sn} holds for any nonnegative $f,g\in L^p(\mathbb{S}^{2n+1})\cap L^{p_\alpha}(\mathbb{S}^{2n+1})$ by 
\eqref{re HLS roughly S} and H\"{o}lder inequality. Then we complete the proof by a density argument.
\end{proof}

Define the extremal problem of \eqref{sub reversed HLS Sn} as
\begin{align}\label{extremal sub reversed}
    N_{Q,\alpha,p}
    &=\inf_{\|f\|_{L^p(\mathbb{S}^{2n+1})}= \|g\|_{L^p(\mathbb{S}^{2n+1})}=1} \int_{\mathbb{S}^{2n+1}}\int_{\mathbb{S}^{2n+1}} \frac {f(\xi) g(\eta)} {|1-\xi\cdot\bar\eta|^{(Q-\alpha)/2}} d\xi d\eta.
\end{align}
From  \eqref{sub reversed HLS Sn}, it is easy to see that
\begin{align}\label{sub re-0}
  0<\tilde{C}\leq N_{Q,\alpha,p} & \leq |\mathbb{S}^{2n+1}|^{1-\frac 2p}\int_{\mathbb{S}^{2n+1}} |1-\xi\cdot\bar\eta|^{\frac{\alpha-Q}{2}}d\eta \nonumber\\
  & = \Bigl( \frac{2\pi^{n+1}}{n!} \Bigr)^{2-\frac 2p} \frac{n! \Gamma(\alpha/2)}{\Gamma^2((Q+\alpha)/2)}.
\end{align}
Furthermore, inspired by the argument of Lemma 3.2 of \cite{Dou-Guo-Zhu} and Proposition 2.5 of \cite{Dou-Guo-Zhu2017}, we will prove the following attainability of sharp constant $N_{Q,\alpha,p}$.

\begin{proposition}\label{sub reversed HLS exist pro}
(1)\ There exist a pair of nonnegative functions $(f,g)\in C^1(\mathbb{S}^{2n+1})\times C^1(\mathbb{S}^{2n+1})$ such that $\|f\|_{L^p(\mathbb{S}^{2n+1})}= \|g\|_{L^p(\mathbb{S}^{2n+1})}=1$ and
    $$N_{Q,\alpha,p}= \int_{\mathbb{S}^{2n+1}}\int_{\mathbb{S}^{2n+1}} \frac {f(\xi) g(\eta)} {|1-\xi\cdot\bar\eta|^{(Q-\alpha)/2}} d\xi d\eta.$$

(2) Minimizer pair $(f,g)$ satisfies the following Euler-Lagrange equations
\begin{equation}\label{EL sub Sn reversed}
    \begin{cases}
    N_{Q,\alpha,p} f^{p-1}(\xi) =\int_{\mathbb{S}^{2n+1}} |1-\xi\cdot\bar\eta|^{(\alpha-Q)/2} g(\eta) d\eta,\\
    N_{Q,\alpha,p} g^{p-1}(\xi) =\int_{\mathbb{S}^{2n+1}} |1-\xi\cdot\bar\eta|^{(\alpha-Q)/2} f(\eta) d\eta.
    \end{cases}
\end{equation}

(3)\ There exists some positive constant $C=C(Q,\alpha,p)$ such that
\begin{gather*}
    0<\frac 1C<f,g<C<+\infty,\\
\intertext{and}
    \|f\|_{C^1(\mathbb{S}^{2n+1})}, \|g\|_{C^1(\mathbb{S}^{2n+1})}\leq C.
\end{gather*}
\end{proposition}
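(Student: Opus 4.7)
The plan is to apply the direct method of the calculus of variations, suitably adapted to the reversed $L^p$ setting with $p\in(0,1)$. I will establish existence via extraction of a convergent subsequence from a minimizing sequence, derive the Euler--Lagrange system by a first variation argument, and obtain the uniform pointwise positivity and $C^1$ regularity by bootstrapping from the resulting integral identity.

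For existence, I would take a minimizing pair $(f_k,g_k)$ with $f_k,g_k\geq 0$, $\|f_k\|_{L^p(\S^{2n+1})}=\|g_k\|_{L^p(\S^{2n+1})}=1$, and $I(f_k,g_k)\to N_{Q,\alpha,p}$. The crucial preparatory estimate is a uniform pointwise positive lower bound on the potentials
$$G_k(\xi):=\int_{\S^{2n+1}}|1-\xi\cdot\bar\eta|^{(\alpha-Q)/2} g_k(\eta)\,d\eta$$
and its symmetric counterpart $F_k$. The reverse H\"older inequality (valid for $0<p<1$) yields
$$G_k(\xi)\geq \left(\int_{\S^{2n+1}}|1-\xi\cdot\bar\eta|^{\frac{(\alpha-Q)p}{2(p-1)}}d\eta\right)^{(p-1)/p},$$
and the integral is finite precisely because the singularity exponent $\frac{(\alpha-Q)p}{2(1-p)}$ is strictly less than $Q$ exactly when $p<p_\alpha$. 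Hence $G_k,F_k\geq c>0$ uniformly, and from $I(f_k,g_k)=\int f_k G_k$ we extract uniform $L^1$ bounds on the sequence. A subcritical scaling heuristic rules out mass concentration (a direct computation shows that concentration of $f_k,g_k$ in a ball of CR radius $r_k\to 0$ forces $I(f_k,g_k)\lesssim r_k^{Q(1-2/p)+\alpha}\to 0$, contradicting $I\to N_{Q,\alpha,p}>0$), so passing to a subsequential weak limit in the measure sense yields a pair $(f,g)$ with absolutely continuous densities. Lower semicontinuity of $I$ under weak convergence of nonnegative integrands (Fatou) together with upper semicontinuity of the concave functional $\int f^p$ for $p<1$ shows that $(f,g)$ attains $N_{Q,\alpha,p}$.

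For the Euler--Lagrange system, standard first variations with respect to $f$ and $g$ together with the Lagrange multiplier rule for the constraints $\int f^p=\int g^p=1$ yield $\int K(\xi,\eta) g(\eta)\,d\eta=\lambda_f\, p\, f(\xi)^{p-1}$ and analogously for $g$, where $K(\xi,\eta)=|1-\xi\cdot\bar\eta|^{(\alpha-Q)/2}$. Pairing the first identity with $f$ and integrating, then using $I(f,g)=N_{Q,\alpha,p}$ and the normalization, identifies $\lambda_f p=\lambda_g p=N_{Q,\alpha,p}$, which is the system stated in the proposition. For the uniform pointwise bounds, the same reverse H\"older estimate applied to the minimizer gives $G\geq c>0$; since $f=(G/N_{Q,\alpha,p})^{1/(p-1)}$ and $1/(p-1)<0$, this yields the upper bound $f\leq C$. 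The $L^\infty$ bound on $g$ follows symmetrically, which then gives a matching upper bound on $G$ and hence $f\geq 1/C$. The $C^1$ regularity comes from differentiating the defining integral of $G$ under the integral sign; the $\xi$-derivative of the kernel is integrable on the sphere for $\alpha>Q$, so $G\in C^1(\S^{2n+1})$, and the formula $f=(G/N_{Q,\alpha,p})^{1/(p-1)}$ with $G$ uniformly bounded from below transfers this regularity to $f$ and $g$.

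The main obstacle is the compactness step: since $L^p$ with $p<1$ provides no linear weak compactness, all compactness funnels through the strict positivity of the dual potentials $G_k,F_k$. This positivity relies crucially on the subcriticality $p<p_\alpha$, as the integral defining the reverse H\"older bound diverges precisely at $p=p_\alpha$. The resulting degeneration at the critical exponent is exactly the blow-up phenomenon that must be circumvented by the renormalization method in the following section.
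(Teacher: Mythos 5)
There is a genuine gap, and it sits at the load-bearing point of your argument. Your entire compactness scheme (and also your proof of the upper bound $f\leq C$ in part (3)) funnels through the reversed H\"older lower bound
$G_k(\xi)\geq \bigl(\int_{\mathbb{S}^{2n+1}}|1-\xi\cdot\bar\eta|^{\frac{(\alpha-Q)p}{2(p-1)}}d\eta\bigr)^{(p-1)/p}$, and you claim the integral is finite iff the singularity exponent $s=\frac{(\alpha-Q)p}{2(1-p)}$ is less than $Q$, i.e.\ iff $p<p_\alpha$. The algebra ``$s<Q\Leftrightarrow p<p_\alpha$'' is right, but the threshold is wrong: on the CR sphere $|1-\xi\cdot\bar\eta|$ is comparable to the \emph{square} of the gauge distance, the set $\{|1-\xi\cdot\bar\eta|<r\}$ has volume $\sim r^{Q/2}$, and $\int_{\mathbb{S}^{2n+1}}|1-\xi\cdot\bar\eta|^{-s}d\eta<\infty$ iff $s<Q/2$ (consistent with the factor $\Gamma(\alpha/2)$ in \eqref{sharp constant hoped}, which blows up as $\alpha\to 0^+$). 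The condition $s<Q/2$ is equivalent to $p<Q/\alpha$, and since $\alpha>Q$ one has $Q/\alpha<p_\alpha=\frac{2Q}{Q+\alpha}$ strictly. For $p\in[Q/\alpha,p_\alpha)$ your integral diverges, the negative power $(p-1)/p$ turns the bound into $G_k\geq 0$, and you lose the uniform $L^1$ bound, the exclusion of degeneration, and the $L^\infty$ bound on the minimizer. Since the subcritical approach lives precisely in the regime $p\to p_\alpha^-$, this is not a removable technicality. The paper gets the $L^1$ bound by an entirely different device: it bounds $\|I_\alpha f_j\|_{L^{p'}}$ with $p'=p/(p-1)<0$, deduces that the sublevel set $\{I_\alpha f_j<M\}$ has measure bounded below, extracts two points $\xi_j^1,\xi_j^2$ in it at mutual distance $\geq\epsilon_0$, and covers $\mathbb{S}^{2n+1}$ by the complements of small balls around them, on which the kernel is bounded below; this works for every $p\in(0,p_\alpha)$.

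Two further steps would also need repair. First, your concentration computation has the wrong sign: for $p<p_\alpha$ one has $Q(1-2/p)+\alpha<0$, so $r_k^{Q(1-2/p)+\alpha}\to+\infty$ rather than $0$ (concentration makes the functional blow up, which still contradicts $I\to N_{Q,\alpha,p}<\infty$, but your stated bound ``$\lesssim r_k^{\cdots}\to 0$'' is vacuous as written); moreover a weak-$*$ limit of an $L^1$-bounded sequence is a priori only a measure, and you do not justify that it has a density. The paper sidesteps both points by taking weak limits of $f_j^p$ in the reflexive space $L^{1/p}$ (so $\int f^p=\lim\int f_j^p=1$ for free) and proving lower semicontinuity of the bilinear form through the interpolation $\int\!\!\int\frac{f_j^pf^{1-p}g_j^pg^{1-p}}{|1-\xi\cdot\bar\eta|^{(Q-\alpha)/2}}\leq\bigl(\int\!\!\int\frac{f_jg_j}{\cdot}\bigr)^p\bigl(\int\!\!\int\frac{fg}{\cdot}\bigr)^{1-p}$. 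Second, the ``standard Lagrange multiplier rule'' is not available until you know $f>0$ a.e., since $\int(f+t\varphi)^p$ need not be differentiable at $t=0$ and $f^{p-1}$ is not defined on $\{f=0\}$; the paper uses one-sided variations with $t>0$, $\varphi>0$ to derive an inequality, proves $f,g>0$ a.e.\ from it, and only then writes \eqref{EL sub Sn reversed}. Your order of deduction in part (3) (lower bound on $G$ first, via the flawed reversed H\"older step) should be reversed to match the paper's: the $L^1$ bound and the boundedness of the kernel give $f^{p-1}\leq C$ hence $f\geq C^{1/(p-1)}>0$, and only then does one obtain the upper bound.
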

\begin{proof}
We will divide the proof into three parts:

1.\ We show  that $N_{Q,\alpha,p}$ can be attained by a pair of nonnegative functions $(f,g)\in L^1(\mathbb{S}^{2n+1})\times L^1(\mathbb{S}^{2n+1})$.



By density argument, we can choose a pair of nonnegative minimizing sequence $\{f_j, g_j\}_{j=1}^{+\infty}\subset C^\infty(\mathbb{S}^{2n+1})\times C^\infty(\mathbb{S}^{2n+1})$ such that
    $$\|f_j\|_{L^{p}(\mathbb{S}^{2n+1})}= \|g_j\|_{L^{p}(\mathbb{S}^{2n+1})}=1,\ j=1,2,\cdots$$
and
    $$N_{Q,\alpha,p}=\lim_{j\rightarrow +\infty}\int_{\mathbb{S}^{2n+1}} \int_{\mathbb{S}^{2n+1}} f_j(\xi) g_j(\eta) |1-\xi\cdot\bar\eta|^{(\alpha-Q)/2} d\xi d\eta.$$

\textbf{ Step 1.}\ We prove that
\begin{equation}\label{sub re-3}
    \|f_j\|_{L^1(\mathbb{S}^{2n+1})}\leq C,\quad \|g_j\|_{L^1(\mathbb{S}^{2n+1})}\leq C,\quad \text{uniformly}.
\end{equation}
Indeed, from \eqref{sub re-0} we know that
there exist two constant $C_1$ and $C_2$ such that
$$0<C_1\le\int_{\mathbb{S}^{2n+1}} \int_{\mathbb{S}^{2n+1}} \frac{f_j(\xi) g_j(\eta)} {|1-\xi\cdot\bar\eta|^{(Q-\alpha)/2}} d\xi d\eta\le C_2<\infty.$$
By reversed H\"{o}lder's inequality, it holds that
\begin{eqnarray*}
\|I_{\alpha} f_j\|_{L^{p^\prime}(\mathbb{S}^{2n+1})}=\|g_j\|_{L^{p}(\mathbb{S}^{2n+1})}\|I_{\alpha} f_j\|_{L^{p^\prime}(\mathbb{S}^{2n+1})}&\le& C_2,\\
\|I_{\alpha} g_j\|_{L^{p^\prime}(\mathbb{S}^{2n+1})}=\|f_j\|_{L^{p}(\mathbb{S}^{2n+1})}\|I_{\alpha} g_j\|_{L^{p^\prime}(\mathbb{S}^{2n+1})}&\le& C_2,
\end{eqnarray*}
where $\frac{1}{p}+\frac{1}{p^\prime}=1$ and $I_\alpha f(\xi)=\int_{\mathbb{S}^{2n+1}} {|1-\xi\cdot\bar\eta|^{(Q-\alpha)/2}} f(\eta) d\eta$.
Noting that $0>p'>q_\alpha=\frac{2Q}{Q-\alpha}$, for some constant $M>0$ determined later, we have
\begin{eqnarray}\label{sub-re-s1}
C_2^{p'}&\le&\int_{\mathbb{S}^{2n+1}}|I_\alpha f_j|^{p'}d\xi=\int_{I_\alpha f_j\ge M}|I_\alpha f_j|^{p'}d\xi+\int_{I_\alpha f_j< M}|I_\alpha f_j|^{p'}d\xi\nonumber\\
&\le& M^{p'}|\mathbb{S}^{2n+1}|+\bigl |\{I_\alpha f_j< M\}\bigr |^{1-\frac{p'}{q_\alpha}} \big(\int_{I_\alpha f_j< M}|I_\alpha f_j|^{q_\alpha}d\xi\big)^\frac{p'}{q_\alpha}.
\end{eqnarray}
By reversed HLS inequality \eqref{re HLS roughly S} and reversed  H\"{o}lder inequality, we have
\begin{eqnarray}\label{sub-re-s2}
\|I_{\alpha} f_j\|_{L^{q_\alpha}(\mathbb{S}^{2n+1})}
&\ge& C_3\|f_j\|_{L^{p_\alpha}(\mathbb{S}^{2n+1})}\nonumber\\
&\ge& C_3 |\mathbb{S}^{2n+1}|^{\frac1{p_\alpha}-\frac1p}\|f_j\|_{L^{p}(\mathbb{S}^{2n+1})} =C_3|\mathbb{S}^{2n+1}|^{\frac1{p_\alpha}-\frac1p}.
\end{eqnarray}
We choose $M$ satisfying $ M^{p'}|\mathbb{S}^{2n+1}|=\frac12 C_2^{p'}$ and follow from  \eqref{sub-re-s1}  and \eqref{sub-re-s2} that
\begin{eqnarray*}
\frac12C_2^{p'}
&\le&\bigl |\{I_\alpha f_j< M\}\bigr |^{1-\frac{p'}{q_\alpha}} \big(\int_{|I_\alpha f_j|< M}|I_\alpha f_j|^{q_\alpha}d\xi\big)^\frac{p'}{q_\alpha}\\
&\le&\bigl |\{I_\alpha f_j< M\}\bigr |^{1-\frac{p'}{q_\alpha}} \big(\int_{\mathbb{S}^{2n+1}}|I_\alpha f_j|^{q_\alpha}d\xi\big)^\frac{p'}{q_\alpha}\\
&\le&\bigl |\{I_\alpha f_j< M\}\bigr |^{1-\frac{p'}{q_\alpha}} \big(C_3|\mathbb{S}^{2n+1}|^{\frac1{p_\alpha}-\frac1p}\big)^{p'},
\end{eqnarray*}
which leads to 
\begin{eqnarray*}
\bigl |\{I_\alpha f_j< M\}\bigr |\ge \big(\frac{C'_2}{C_3|\mathbb{S}^{2n+1}|^{\frac1{p_\alpha}-\frac1p}}\big)^{\frac{q_\alpha p'}{q_\alpha-p'}}>0,
\end{eqnarray*}
where $(c'_2)^{p'}=\frac12C_2^{p'}$. So, there exists  an $\epsilon_0>0$, such that for any $j\in \mathbb{N}^+$, we can find two points $\xi^1_j,\xi^2_j\in \{I_\alpha f_j< M\}$ satisfying $|\xi^1_j-\xi^2_j|\ge\epsilon_0$. Then
\begin{eqnarray*}
\int_{\mathbb{S}^{2n+1}} f_j(\xi)d\xi
&\le&\int_{\mathbb{S}^{2n+1}\backslash \{B(\xi^1_j,\frac{\epsilon_0}4)\}} f_j(\xi)d\xi
+\int_{\mathbb{S}^{2n+1}\backslash \{B(\xi^2_j,\frac{\epsilon_0}4)\}} f_j(\xi)d\xi\\
&\le&C_4\int_{\mathbb{S}^{2n+1}\backslash \{B(\xi^1_j,\frac{\epsilon_0}4)\}}|1-\xi^1_j\cdot\bar\eta|^{(Q-\alpha)/2} f_j(\xi)d\xi\\
& &+C_4\int_{\mathbb{S}^{2n+1}\backslash \{B(\xi^2_j,\frac{\epsilon_0}4)\}}|1-\xi^2_j\cdot\bar\eta|^{(Q-\alpha)/2} f_j(\xi)d\xi\\
&\le& 2C_4M.
\end{eqnarray*}
Hence, we obtain $ \|f_j\|_{L^1(\mathbb{S}^{2n+1})}\leq C$. In the same way, we have $\|g_j\|_{L^1(\mathbb{S}^{2n+1})}\leq C$.

\textbf{ Step 2.}\ There exist two subsequences of $\{f_j^p\}$ and $\{g_j^p\}$ (still denoted by $\{f_j^p\}$ and $\{g_j^p\}$) and two nonnegative functions $f, g\in L^{1}(\mathbb{S}^{2n+1})$ such that
\begin{equation}\label{sub re-1}
    \int_{\mathbb{S}^{2n+1}} f_j^p d\xi\rightarrow \int_{\mathbb{S}^{2n+1}} f^p d\xi,\quad \int_{\mathbb{S}^{2n+1}} g_j^p d\xi\rightarrow \int_{\mathbb{S}^{2n+1}} g^p d\xi,\quad \text{as}\quad j\rightarrow +\infty.
\end{equation}

In fact, according to the theory of  reflexive space, we know from \eqref{sub re-3} that there exist two subsequences of $\{f_j^p\}$ and $\{g_j^p\}$ (still denoted by $\{f_j^p\}$ and $\{g_j^p\}$) and two nonnegative functions $f, g\in L^{1}(\mathbb{S}^{2n+1})$ such that
    $$f_j^p\rightharpoonup f^p \quad \text{and}\quad \ g_j^p\rightharpoonup g^p \quad \text{weakly in}\quad L^{\frac1p}(\mathbb{S}^{2n+1}).$$
Using the fact $1\in L^{\frac1{1-p}}(\mathbb{S}^{2n+1})$,  we get \eqref{sub re-1}.

\textbf{ Step 3.}\ We show
\begin{equation}\label{sub re-2}
 \int_{\mathbb{S}^{2n+1}} \int_{\mathbb{S}^{2n+1}} \frac{f(\xi) g(\eta)} {|1-\xi\cdot\bar\eta|^{(Q-\alpha)/2}} d\xi d\eta\leq\liminf_{j\rightarrow +\infty}\int_{\mathbb{S}^{2n+1}} \int_{\mathbb{S}^{2n+1}} \frac{f_j(\xi) g_j(\eta)} {|1-\xi\cdot\bar\eta|^{(Q-\alpha)/2}} d\xi d\eta.
\end{equation}

As in Lemma 3.2 of  \cite{Dou-Guo-Zhu}, we have that, as $j\rightarrow +\infty$,
\begin{equation}\label{sub re-6}
 \int_{\mathbb{S}^{2n+1}}\frac{g_j^p(\eta) g^{1-p}(\eta)} {|1-\xi\cdot\bar\eta|^{(Q-\alpha)/2}} d\eta \rightarrow \int_{\mathbb{S}^{2n+1}}\frac{ g(\eta)} {|1-\xi\cdot\bar\eta|^{(Q-\alpha)/2}} d\eta
\end{equation}
uniformly for $\xi\in\mathbb{S}^{2n+1}$. Then, for any $\epsilon>0$, there exists some $N>0$ such that for any $j>N$,
    $$\Bigl|\int_{\mathbb{S}^{2n+1}}\frac{g_j^p(\eta) g^{1-p}(\eta)} {|1-\xi\cdot\bar\eta|^{(Q-\alpha)/2}} d\eta - \int_{\mathbb{S}^{2n+1}}\frac{ g(\eta)} {|1-\xi\cdot\bar\eta|^{(Q-\alpha)/2}} d\eta\Bigr|\leq \epsilon$$
and
\begin{align}\label{sub re-8}
    &\Bigl|\int_{\mathbb{S}^{2n+1}} f_j^p(\xi) f^{1-p}(\xi) \int_{\mathbb{S}^{2n+1}}\frac{g_j^p(\eta) g^{1-p}(\eta)} {|1-\xi\cdot\bar\eta|^{(Q-\alpha)/2}} d\eta d\xi\nonumber\\
    &\hspace{0.5cm}- \int_{\mathbb{S}^{2n+1}} f_j^p(\xi) f^{1-p}(\xi) \int_{\mathbb{S}^{2n+1}}\frac{ g(\eta)} {|1-\xi\cdot\bar\eta|^{(Q-\alpha)/2}} d\eta d\xi \Bigr|\nonumber\\
    &\leq \epsilon \int_{\mathbb{S}^{2n+1}} f_j^p(\xi) f^{1-p}(\xi) d\xi\leq C\epsilon.
\end{align}
On the other hand, noting $f^{1-p}(\xi)\in L^{1/(1-p)}(\mathbb{S}^{2n+1})$ and
    $$\int_{\mathbb{S}^{2n+1}} |1-\xi\cdot\bar\eta|^{(\alpha-Q)/2} g(\eta) d\eta\leq C\int_{\mathbb{S}^{2n+1}} g(\eta)d\eta\leq C,$$
we have by the weak convergence that, as $j\rightarrow +\infty$,
\begin{align}\label{sub re-7}
    &\int_{\mathbb{S}^{2n+1}} f_j^p(\xi) f^{1-p}(\xi) \int_{\mathbb{S}^{2n+1}}\frac{ g(\eta)} {|1-\xi\cdot\bar\eta|^{(Q-\alpha)/2}} d\eta d\xi\nonumber\\
    \rightarrow &\int_{\mathbb{S}^{2n+1}} \int_{\mathbb{S}^{2n+1}} \frac{f(\xi) g(\eta)} {|1-\xi\cdot\bar\eta|^{(Q-\alpha)/2}} d\eta d\xi.
\end{align}
Combining \eqref{sub re-8} and \eqref{sub re-7}, it holds that
\begin{align}\label{sub re-9}
    & \int_{\mathbb{S}^{2n+1}} \int_{\mathbb{S}^{2n+1}} \frac{f(\xi) g(\eta)} {|1-\xi\cdot\bar\eta|^{(Q-\alpha)/2}} d\eta d\xi\nonumber\\
    =&\lim_{j\rightarrow +\infty} \int_{\mathbb{S}^{2n+1}} \int_{\mathbb{S}^{2n+1}} \frac{f_j^p(\xi) f^{1-p}(\xi)g_j^p(\eta) g^{1-p}(\eta)} {|1-\xi\cdot\bar\eta|^{(Q-\alpha)/2}} d\eta d\xi \nonumber\\
    \leq &\lim_{j\rightarrow +\infty} \Bigl(\int_{\mathbb{S}^{2n+1}} \int_{\mathbb{S}^{2n+1}} \frac{f_j(\xi) g_j(\eta)} {|1-\xi\cdot\bar\eta|^{(Q-\alpha)/2}} d\eta d\xi\Bigr)^p\nonumber\\
    &\quad\cdot\Bigl(\int_{\mathbb{S}^{2n+1}} \int_{\mathbb{S}^{2n+1}} \frac{f(\xi) g(\eta)} {|1-\xi\cdot\bar\eta|^{(Q-\alpha)/2}} d\eta d\xi\Bigr)^{1-p}.
\end{align}
Thus, \eqref{sub re-2} holds.

Combining Step 1, Step 2 with Step 3, we know that the function pair $(f,g)\in L^1(\mathbb{S}^{2n+1})\times L^1(\mathbb{S}^{2n+1})$ is a minimizer.

2.\ We present that
$f,g$ satisfy the Euler-Lagrange equations \eqref{EL sub Sn reversed}.

Because $0<p<1$,  it brings some difficulties to deduce \eqref{EL sub Sn reversed}. To overcome it, we need to prove $f>0,\ g>0$ a.e. on $\mathbb{S}^{2n+1}$.

For any positive $\varphi\in C^\infty(\mathbb{S}^{2n+1})$ and $t>0$ small, we have $f+t\varphi>0$ on $\mathbb{S}^{2n+1}$ and
\begin{align}\label{sub re-10}
   & t\int_{\mathbb{S}^{2n+1}} \int_{\mathbb{S}^{2n+1}} \varphi(\xi) g(\eta) |1-\xi\cdot\bar\eta|^{(\alpha-Q)/2} d\xi d\eta \nonumber\\
  = & \int_{\mathbb{S}^{2n+1}} \int_{\mathbb{S}^{2n+1}} (f+t\varphi)(\xi) g(\eta) |1-\xi\cdot\bar\eta|^{(\alpha-Q)/2} d\xi d\eta\nonumber\\
  &-\int_{\mathbb{S}^{2n+1}} \int_{\mathbb{S}^{2n+1}} f(\xi) g(\eta) |1-\xi\cdot\bar\eta|^{(\alpha-Q)/2} d\xi d\eta\nonumber\\
  \geq & N_{Q,\alpha,p} \bigl(\|f+t\varphi\|_{L^p(\mathbb{S}^{2n+1})} -\|f\|_{L^p(\mathbb{S}^{2n+1})}\bigr)\nonumber\\
  =& N_{Q,\alpha,p} t\cdot \Bigl(\int_{\mathbb{S}^{2n+1}} (f+\theta\varphi)^p d\xi\Bigr)^{\frac 1p-1} \int_{\mathbb{S}^{2n+1}} (f+\theta\varphi)^{p-1}\varphi d\xi\ (0<\theta<t),
\end{align}
where the mean value theorem was used  between the fourth and fifth line. Then, by Fatou's lemma, it has
\begin{align}\label{sub re-10-1}
   & \int_{\mathbb{S}^{2n+1}} \int_{\mathbb{S}^{2n+1}} \varphi(\xi) g(\eta) |1-\xi\cdot\bar\eta|^{(\alpha-Q)/2} d\xi d\eta \nonumber\\
  \geq & N_{Q,\alpha,p} \lim_{t\rightarrow 0^+}\Bigl(\int_{\mathbb{S}^{2n+1}} (f+\theta\varphi)^p d\xi\Bigr)^{\frac 1p-1}\cdot \lim_{t\rightarrow 0^+}\int_{\mathbb{S}^{2n+1}} (f+\theta\varphi)^{p-1}\varphi d\xi \nonumber\\
  \geq & N_{Q,\alpha,p} \int_{\mathbb{S}^{2n+1}} f^{p-1}\varphi d\xi.
\end{align}
By now, we claim that $f>0$ a.e. on $\mathbb{S}^{2n+1}$. Otherwise, for any $\epsilon>0$, there exists $\Omega_\epsilon\subset\mathbb{S}^{2n+1}$ such that $|\Omega_\epsilon|>0$ and
    $$f(\xi)<\epsilon, \quad \forall\xi\in\Omega_\epsilon.$$
Then, it follows from \eqref{sub re-10-1} that
\begin{align*}
  \epsilon^{p-1}\int_{\Omega_\epsilon} d\xi \leq & \int_{\Omega_\epsilon} f^{p-1} d\xi \leq \frac 1{N_{Q,\alpha,p}}\int_{\mathbb{S}^{2n+1}} \int_{\mathbb{S}^{2n+1}} g(\eta) |1-\xi\cdot\bar\eta|^{(\alpha-Q)/2} d\xi d\eta\\
  \leq & C \int_{\mathbb{S}^{2n+1}} g(\eta) d\eta\leq C,
\end{align*}
which yields a contradiction as $\epsilon>0$ small enough. Similarly, we also have $g>0$ a.e. on $\mathbb{S}^{2n+1}$. So, minimizer pair $(f,g)$ is a weak solution of \eqref{EL sub Sn reversed}.

3.\ We finally prove that $(f,g)\in C^1(\mathbb{S}^{2n+1})\times C^1(\mathbb{S}^{2n+1})$. 

Since $f,g\in L^1(\mathbb{S}^{2n+1})$ and $0<p<p_\alpha<1$, it is easy to prove from \eqref{EL sub Sn reversed} that $f\geq C_6>0$ and $g\geq C_6>0$. Then, by \eqref{EL sub Sn reversed}, we have $f<C_7$ and $g<C_7$. Moreover, since $\alpha>Q\geq 4$, we have $f,g\in C^1(\mathbb{S}^{2n+1})$ and $\|f\|_{C^1(\mathbb{S}^{2n+1})}, \|f\|_{C^1(\mathbb{S}^{2n+1})}\leq C_8<+\infty$.
\end{proof}


\section{Sharp HLS inequalities on $\mathbb{S}^{2n+1}$}\label{sec critical}
\begin{lemma}\label{lem minimizing sequence}
$N_{Q,\alpha,p}\rightarrow N_{Q,\alpha}$ as $p\rightarrow p_\alpha^-$. Further more, the corresponding minimizer pairs $\{f_p,g_p\}\in C^1(\mathbb{S}^{2n+1})\times C^1(\mathbb{S}^{2n+1})$ 
form a minimizing sequence for sharp constant $N_{Q,\alpha}$, namely,
\begin{equation}\label{critical-2}
    N_{Q,\alpha}=\lim_{p\rightarrow p_\alpha^-} \frac{\int_{\mathbb{S}^{2n+1}} \int_{\mathbb{S}^{2n+1}} {f_p(\xi) g_p(\eta)} {|1-\xi\cdot\bar\eta|^{(\alpha-Q)/2}} d\xi d\eta} {\|f_p\|_{L^{p_\alpha}(\mathbb{S}^{2n+1})}  \|g_p\|_{L^{p_\alpha}(\mathbb{S}^{2n+1})}}.
\end{equation}
\end{lemma}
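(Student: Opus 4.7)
The plan is to establish matching upper and lower bounds for $\lim_{p\to p_\alpha^-} N_{Q,\alpha,p}$ and then squeeze the ratio in \eqref{critical-2}. The whole argument hinges on one elementary fact on the compact sphere: H\"older's inequality yields $\|f\|_p \le |\mathbb{S}^{2n+1}|^{1/p-1/p_\alpha}\|f\|_{p_\alpha}$ for every $0<p<p_\alpha$. Applied to the $L^p$-normalised minimisers, this produces the crucial one-sided bound
$$
\|f_p\|_{p_\alpha} \,\ge\, |\mathbb{S}^{2n+1}|^{1/p_\alpha-1/p}, \qquad \|g_p\|_{p_\alpha}\,\ge\,|\mathbb{S}^{2n+1}|^{1/p_\alpha-1/p},
$$
and the exponent vanishes as $p\to p_\alpha^-$, so both $p_\alpha$-norms are bounded from below by a quantity tending to $1$. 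No matching upper bound on $\|f_p\|_{p_\alpha}$ is needed, which is what makes the argument go through in spite of the blow-up concerns to be treated later in Section \ref{sec critical}.

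For the lower bound on $N_{Q,\alpha,p}$, the pair $(f_p/\|f_p\|_{p_\alpha},g_p/\|g_p\|_{p_\alpha})$ is admissible for the $p_\alpha$-problem, so by the definition of $N_{Q,\alpha}$,
$$
N_{Q,\alpha,p} = \int_{\mathbb{S}^{2n+1}}\!\int_{\mathbb{S}^{2n+1}} \frac{f_p(\xi)g_p(\eta)}{|1-\xi\cdot\bar\eta|^{(Q-\alpha)/2}}d\xi\,d\eta \,\ge\, N_{Q,\alpha}\|f_p\|_{p_\alpha}\|g_p\|_{p_\alpha} \,\ge\, N_{Q,\alpha}|\mathbb{S}^{2n+1}|^{2(1/p_\alpha-1/p)},
$$
whence $\liminf N_{Q,\alpha,p}\ge N_{Q,\alpha}$. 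For the upper bound, take any smooth positive test pair $(\tilde f,\tilde g)$ with $\|\tilde f\|_{p_\alpha}=\|\tilde g\|_{p_\alpha}=1$. Dividing by $L^p$-norms makes it admissible for $N_{Q,\alpha,p}$, so
$$
N_{Q,\alpha,p} \,\le\, \frac{1}{\|\tilde f\|_p\|\tilde g\|_p}\int_{\mathbb{S}^{2n+1}}\!\int_{\mathbb{S}^{2n+1}}\frac{\tilde f(\xi)\tilde g(\eta)}{|1-\xi\cdot\bar\eta|^{(Q-\alpha)/2}}d\xi\,d\eta.
$$
Uniform positivity and boundedness of $\tilde f,\tilde g$ combined with dominated convergence give $\|\tilde f\|_p,\|\tilde g\|_p\to 1$ as $p\to p_\alpha^-$, so $\limsup N_{Q,\alpha,p}$ is bounded by the bilinear form evaluated at $(\tilde f,\tilde g)$. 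Infimising over smooth positive pairs and invoking a standard truncation-from-above plus mollification density argument on $\mathbb{S}^{2n+1}$ (legitimate because $\alpha>Q$ makes the kernel $|1-\xi\cdot\bar\eta|^{(\alpha-Q)/2}$ a bounded continuous function, and the bilinear form is therefore continuous along the approximation) yields $\limsup N_{Q,\alpha,p}\le N_{Q,\alpha}$. Together with the previous step, $N_{Q,\alpha,p}\to N_{Q,\alpha}$.

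The minimising-sequence claim \eqref{critical-2} follows by a squeeze: the ratio on the right of \eqref{critical-2} is always $\ge N_{Q,\alpha}$ by definition, while the H\"older bound gives
$$
\frac{N_{Q,\alpha,p}}{\|f_p\|_{p_\alpha}\|g_p\|_{p_\alpha}} \,\le\, \frac{N_{Q,\alpha,p}}{|\mathbb{S}^{2n+1}|^{2(1/p_\alpha-1/p)}} \,\longrightarrow\, N_{Q,\alpha}.
$$
The main (but minor) obstacle is the density step inside the upper bound: admissible nonnegative $L^{p_\alpha}$ pairs must be approximated by smooth positive pairs along which the bilinear form converges, and since $p_\alpha\in(0,1)$ one must work with the quasi-norm $\int|f-g|^{p_\alpha}$ rather than a true norm. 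Boundedness of the kernel on the compact product sphere keeps the bilinear form $L^1$-continuous along truncations and mollifications, so this is routine. Crucially, the whole proof avoids any attempt to bound $\|f_p\|_{p_\alpha}$ from \emph{above} uniformly in $p$; that is precisely the blow-up issue deferred to the remainder of Section \ref{sec critical}, which will need to be handled by the renormalisation method advertised in the introduction.
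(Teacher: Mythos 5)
Your proof is correct and follows essentially the same route as the paper: Hölder's inequality on the normalised subcritical minimisers gives $\liminf_{p\to p_\alpha^-}N_{Q,\alpha,p}\ge N_{Q,\alpha}$, testing the subcritical functional with near-optimal critical pairs gives the reverse inequality, and the same Hölder bound squeezes the ratio in \eqref{critical-2}. The only difference is that your upper bound detours through smooth positive test pairs plus a truncation/mollification density argument, whereas the paper simply plugs in an arbitrary nonnegative minimizing sequence $(f_k,g_k)\subset L^{p_\alpha}(\mathbb{S}^{2n+1})\times L^{p_\alpha}(\mathbb{S}^{2n+1})$ and uses that $\|f_k\|_{L^p}\to\|f_k\|_{L^{p_\alpha}}$ as $p\to p_\alpha^-$ for each fixed $k$ (dominated convergence with dominating function $1+|f_k|^{p_\alpha}$ on the compact sphere), so your density step, while valid, is superfluous.
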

\begin{proof}
Let $\{f_p,g_p\}\in C^1(\mathbb{S}^{2n+1})\times C^1(\mathbb{S}^{2n+1})$ be  a pair of minimizer given by Proposition \ref{pro sub HLS}. Namely, $\{f_p,g_p\}$ satisfy $\|f_p\|_{L^p(\mathbb{S}^{2n+1})}= \|g_p\|_{L^p(\mathbb{S}^{2n+1})}=1$ and
    $$N_{Q,\alpha,p}= \int_{\mathbb{S}^{2n+1}}\int_{\mathbb{S}^{2n+1}} \frac {f_p(\xi) g_p(\eta)} {|1-\xi\cdot\bar\eta|^{(Q-\alpha)/2}} d\xi d\eta.$$
Write $\tilde{f}_p =\frac{f_p}{\|f_p\|_{L^{p_\alpha}(\mathbb{S}^{2n+1})}}$ and $\tilde{g}_p =\frac{g_p}{\|g_p\|_{L^{p_\alpha}(\mathbb{S}^{2n+1})}}$. Then
\begin{align*}
    N_{Q,\alpha,p}=& \|f_p\|_{L^{p_\alpha}(\mathbb{S}^{2n+1})} \|g_p\|_{L^{p_\alpha}(\mathbb{S}^{2n+1})} \int_{\mathbb{S}^{2n+1}} \int_{\mathbb{S}^{2n+1}} \frac {\tilde{f}_p(\xi) \tilde{g}_p(\eta)} {|1-\xi\cdot\bar\eta|^{(Q-\alpha)/2}} d\xi d\eta\\
    \geq & |\mathbb{S}^{2n+1}|^{2(1/p_\alpha-1/p)} N_{Q,\alpha} \rightarrow N_{Q,\alpha}, \quad \text{as} \quad p\rightarrow p_\alpha^-,
\end{align*}
which implies that
\begin{equation}\label{critical-4}
    \liminf_{p\rightarrow p_\alpha^+} N_{Q,\alpha,p}\geq N_{Q,\alpha}.
\end{equation}

Let $\{f_k,g_k\}_{k=1}^{+\infty}\subset L^{p_\alpha}(\mathbb{S}^{2n+1})\times L^{p_\alpha}(\mathbb{S}^{2n+1})$ be a pair of  minimizing sequence of $N_{Q,\alpha}$, namely,
    $$N_{Q,\alpha}=\lim_{k\rightarrow +\infty} \frac{\int_{\mathbb{S}^{2n+1}} \int_{\mathbb{S}^{2n+1}} f_k(\xi) g_k(\eta) |1-\xi\cdot\bar\eta|^{(\alpha-Q)/2} d\xi d\eta} {\|f_k\|_{L^{p_\alpha}(\mathbb{S}^{2n+1})} \|g_k\|_{L^{p_\alpha}(\mathbb{S}^{2n+1})}} .$$
Write $\tilde{f}_k =\frac{f_k}{\|f_k\|_{L^p(\mathbb{S}^{2n+1})}}$ and $\tilde{g}_k =\frac{g_k}{\|g_k\|_{L^p(\mathbb{S}^{2n+1})}}$ for any $p\in (0, p_\alpha)$. It is easy to see
\begin{align}\label{critical-1}
  N_{Q,\alpha,p}\leq &\frac{\int_{\mathbb{S}^{2n+1}} \int_{\mathbb{S}^{2n+1}} \tilde{f}_k(\xi) \tilde{g}_k(\eta) |1-\xi\cdot\bar\eta|^{(\alpha-Q)/2} d\xi d\eta} {\|\tilde{f}_k\|_{L^{p}(\mathbb{S}^{2n+1})} \|\tilde{g}_k\|_{L^{p}(\mathbb{S}^{2n+1})}}\nonumber\\
  =& \frac{\int_{\mathbb{S}^{2n+1}} \int_{\mathbb{S}^{2n+1}} f_k(\xi) g_k(\eta) |1-\xi\cdot\bar\eta|^{(\alpha-Q)/2} d\xi d\eta} {\|f_k\|_{L^{p}(\mathbb{S}^{2n+1})} \|g_k\|_{L^{p}(\mathbb{S}^{2n+1})}}.
\end{align}
Sending $p$ to $p_\alpha^-$ in \eqref{critical-1}, we get
    $$\limsup_{p\rightarrow p_\alpha^-} N_{Q,\alpha,p}\leq \frac{\int_{\mathbb{S}^{2n+1}} \int_{\mathbb{S}^{2n+1}} f_k(\xi) g_k(\eta) |1-\xi\cdot\bar\eta|^{(\alpha-Q)/2} d\xi d\eta} {\|f_k\|_{L^{p_\alpha}(\mathbb{S}^{2n+1})} \|g_k\|_{L^{p_\alpha}(\mathbb{S}^{2n+1})}}.$$
And then, letting $k\rightarrow +\infty$, we deduce
\begin{equation}\label{critical-5}
    \limsup_{p\rightarrow p_\alpha^+} N_{Q,\alpha,p}\leq N_{Q,\alpha}.
\end{equation}
Combining \eqref{critical-4} with \eqref{critical-5}, we arrive at $\lim_{p\rightarrow p_\alpha^+} N_{Q,\alpha,p}= N_{Q,\alpha}.$

By the definition of $N_{Q,\alpha}$ and H\"{o}lder inequality,
\begin{align*}
    N_{Q,\alpha}\leq &\frac{\int_{\mathbb{S}^{2n+1}} \int_{\mathbb{S}^{2n+1}} {f_p(\xi) g_p(\eta)} {|1-\xi\cdot\bar\eta|^{(\alpha-Q)/2}} d\xi d\eta} {\|f_p\|_{L^{p_\alpha}(\mathbb{S}^{2n+1})}  \|g_p\|_{L^{p_\alpha}(\mathbb{S}^{2n+1})}}\\
    \leq &\frac{\int_{\mathbb{S}^{2n+1}} \int_{\mathbb{S}^{2n+1}} {f_p(\xi) g_p(\eta)} {|1-\xi\cdot\bar\eta|^{(\alpha-Q)/2}} d\xi d\eta} {|\mathbb{S}^{2n+1}|^{2(1/p_\alpha-1/p)}}\\
    \rightarrow & N_{Q,\alpha}\quad \text{as}\quad p\rightarrow p_\alpha^-.
\end{align*}
Hence, we deduce that \eqref{critical-2} holds and the lemma is proved.
\end{proof}


\medskip

\textbf{Proof of Theorem \ref{pro HLS exist}.}\
As in Lemma \ref{lem minimizing sequence},
take the minimizer $\{f_p,g_p\}\in C^1(\mathbb{S}^{2n+1})\times C^1(\mathbb{S}^{2n+1})$ as a minimizing sequence for $N_{Q,\alpha}$. Then, $\{f_p,g_p\}$ satisfy \eqref{EL sub Sn reversed}. By the translation invariance, we assume, without loss of generality, that $f_p(\mathfrak{N})=\max_{\xi\in\mathbb{S}^n} f_{p}(\xi)$ with $\mathfrak{N}=(0,\cdots,0,1)$.

\textbf{Case 1:} For some subsequence $p_j\rightarrow p_\alpha^-$, $\max\{\max_{\xi\in\mathbb{S}^{2n+1}} f_{p_j}, \max_{\xi\in\mathbb{S}^{2n+1}} g_{p_j} \}$ is uniformly bounded. Then, sequences $\{f_{p_j}\}$ and $\{g_{p_j}\}$ are uniformly bounded and equicontinuous on $\mathbb{S}^{2n+1}$. Moreover, by \eqref{EL sub Sn reversed}, there exists some positive constant $C$ independent of $p_j$ such that $f_{p_j}, g_{p_j}\geq C>0$. So, by Arzel\`{a}-Ascoli theorem, there exist two subsequences of $\{f_{p_j}\}$ and $\{g_{p_j}\}$ (still denoted by $\{f_{p_j}\}$ and $\{g_{p_j}\}$) and two positive functions $f,g\in C^1(\mathbb{S}^{2n+1})$ such that
\begin{gather*}
  f_{p_j}\rightarrow f\quad \text{and} \quad g_{p_j}\rightarrow g \quad\text{ uniformly on }\quad \mathbb{S}^{2n+1}.
\end{gather*}
Then,
\begin{gather*}
    \int_{\mathbb{S}^{2n+1}} f^{p_\alpha}(\xi) d\xi=\lim_{p_j\rightarrow p_\alpha} \int_{\mathbb{S}^{2n+1}} f_{p_j}^{p_j}(\xi) d\xi=1,\\
    \int_{\mathbb{S}^{2n+1}} g^{p_\alpha}(\xi) d\xi=\lim_{p_j\rightarrow p_\alpha} \int_{\mathbb{S}^{2n+1}} g_{p_j}^{p_j}(\xi) d\xi=1.
\end{gather*}
Furthermore, by \eqref{EL sub Sn reversed} and Lemma \ref{lem minimizing sequence},
\begin{equation}\label{cri re-23}
    \begin{cases}
    N_{Q,\alpha}f^{p_\alpha-1}(\xi) =\int_{\mathbb{S}^{2n+1}} |1-\xi\cdot\bar\eta|^{(\alpha-Q)/2} g(\eta) d\eta,\\
    N_{Q,\alpha}g^{p_\alpha-1}(\xi) =\int_{\mathbb{S}^{2n+1}} |1-\xi\cdot\bar\eta|^{(\alpha-Q)/2} g(\eta) d\eta,
    \end{cases}
\end{equation}
 as $j\rightarrow +\infty$. Namely, $\{f,g\}$ are minimizers.

\textbf{Case 2:} For any subsequence $p_j\rightarrow p_\alpha^-$, $f_{p_j}(\mathfrak{N})\rightarrow +\infty$ or $\max_{\xi\in\mathbb{S}^n} g_{p_j} \rightarrow +\infty$. Without loss of generality, we assume $f_{p_j}(\mathfrak{N})\rightarrow +\infty$.

\textbf{Case 2a:} $\limsup_{j\rightarrow +\infty} \frac{f_{p_j}(\mathfrak{N})}{\max_{\xi\in\mathbb{S}^{2n+1}} g_{p_j}}=+\infty$. Then, there exists a subsequence of $\{p_j\}$ (still denoted by $\{p_j\}$) such that $f_{p_j}(\mathfrak{N})\rightarrow +\infty$ and $\frac{f_{p_j}(\mathfrak{N})}{\max_{\xi\in\mathbb{S}^{2n+1}} g_{p_j}}\rightarrow +\infty$. Let $\phi_j=f_{p_j}^{p_j-1}$ and $\psi_j=g_{p_j}^{p_j-1}$. Then, $\phi_j$ and $\psi_j$ satisfy
\begin{equation}\label{cri re_1}
  \int_{\mathbb{S}^{2n+1}} \phi_j^{q_j} d\xi=\int_{\mathbb{S}^{2n+1}} \psi_j^{q_j} d\xi=1
\end{equation}
and by \eqref{EL sub Sn reversed},
\begin{equation}\label{cri re-2}
    \begin{cases}
      N_{Q,\alpha,p_j} \phi_j(\xi)=\int_{\mathbb{S}^{2n+1}} |1-\xi\cdot\bar\eta|^{(\alpha-Q)/2} \psi_{j}^{q_j-1}(\eta) d\eta,\\
      N_{Q,\alpha,p_j} \psi_j(\xi)=\int_{\mathbb{S}^{2n+1}} |1-\xi\cdot\bar\eta|^{(\alpha-Q)/2} \phi_j^{q_j-1}(\eta) d\eta,
    \end{cases}
\end{equation}
where $\frac{1}{p_j}+\frac{1}{q_j}=1$.
Applying Cayley transformation and dilations on $\mathbb{H}^n$, we get from \eqref{cri re-2} that
\begin{equation}\label{cri re-3}
    \begin{cases}
        \frac{N_{Q,\alpha,p_j} \phi_j(\mathcal{C}(\delta_\lambda (u)))}{\left((1+|\lambda z|^2)^2+(\lambda^2 t)^2\right)^{\frac{Q-\alpha}{4}}} =2^{\frac{Q+\alpha-2}2} \lambda^\alpha \int_{\mathbb{H}^n} \frac{\left((1+|\lambda z'|^2)^2+(\lambda^2 t')^2\right)^{-\frac{Q+\alpha}{4}} \psi_j(\mathcal{C}(\delta_\lambda (v)))^{q_j-1}} {|u^{-1}v|^{Q-\alpha}} dv,\\
        \frac{N_{Q,\alpha,p_j} \psi_j(\mathcal{C}(\delta_\lambda (u)))}{\left((1+|\lambda z|^2)^2+(\lambda^2 t)^2\right)^{\frac{Q-\alpha}{4}}} =2^{\frac{Q+\alpha-2}2} \lambda^\alpha \int_{\mathbb{H}^n} \frac{\left((1+|\lambda z'|^2)^2+(\lambda^2 t')^2\right)^{-\frac{Q+\alpha}{4}} \phi_j(\mathcal{C}(\delta_\lambda (v)))^{q_j-1}} {|u^{-1}v|^{Q-\alpha}} dv.
    \end{cases}
\end{equation}
Take $\lambda=\lambda_j$ satisfying $\lambda_j^{\alpha/(q_j-2)} \phi_j(\mathcal{C}(0))=1$ and denote
\begin{equation}\label{cri re-14}\begin{cases}
  \Phi_j(u)=\frac{ \lambda_j^{\alpha/(q_j-2)} } {\left((1+|\lambda z|^2)^2+(\lambda^2 t)^2\right)^{\frac{Q-\alpha}{4}}} \phi_j(\mathcal{C}(\delta_{\lambda_j} (u))),\\
  \Psi_j(u)=\frac{ \lambda_j^{\alpha/(q_j-2)} } {\left((1+|\lambda z|^2)^2+(\lambda^2 t)^2\right)^{\frac{Q-\alpha}{4}}} \psi_j(\mathcal{C}(\delta_{\lambda_j} (u))).
\end{cases}\end{equation}
Then, $\Phi_j,\Psi_j$ satisfy the following renormalized equations
\begin{equation}\label{cri re-5}
  \begin{cases}
    N_{Q,\alpha,p_j} \Phi_j(u)=2^{\frac{Q+\alpha-2}2} \int_{\mathbb{H}^n} \left((1+|\lambda_j z'|^2)^2+(\lambda_j^2 t')^2\right)^{\frac{\alpha-Q}{4}(q_\alpha-q_j)} \frac{\Psi_j^{q_j-1}(v)}{|u^{-1}v|^{Q-\alpha}} dv, \\
    N_{Q,\alpha,p_j} \Psi_j(u)=2^{\frac{Q+\alpha-2}2} \int_{\mathbb{H}^n} \left((1+|\lambda_j z'|^2)^2+(\lambda_j^2 t')^2\right)^{\frac{\alpha-Q}{4}(q_\alpha-q_j)} \frac{\Phi_j^{q_j-1}(v)}{|u^{-1}v|^{Q-\alpha}} dv.
  \end{cases}
\end{equation}
Moreover, $\Phi_j(u)\geq \Phi_j(0)=1$ and
\begin{equation}\label{cri re-7}
    \Psi_j(u)\geq \lambda_j^{\alpha/(q_j-2)} \min_{\xi\in \mathbb{S}^{2n+1}} \psi_j=\frac{\min_{\xi\in \mathbb{S}^{2n+1}} \psi_j}{\phi_j(\mathcal{S}(0))}\rightarrow +\infty
\end{equation}
uniformly for any $u$ as $j\rightarrow +\infty$.

Claim:  There exist $C_1, C_2>0$ such that, for any $u\in\mathbb{H}^n$, when $j\to\infty$,
\begin{equation}\label{cri re-4}
0<C_1(1+|u|^{\alpha-Q})\le \Phi_j(u) \le C_2(1+|u|^{\alpha-Q}) \ \mbox{uniformly.}
\end{equation}
Once the claim holds,
\begin{align*}
    N_{Q,\alpha,p_j}\Psi_j(0)=& 2^{\frac{Q+\alpha-2}2} \int_{\mathbb{H}^n} \left((1+|\lambda_j z'|^2)^2+(\lambda_j^2 t')^2\right)^{\frac{\alpha-Q}{4}(q_\alpha-q_j)} \frac{\Phi_j^{q_j-1}(v)}{|v|^{Q-\alpha}} dv\\
    \leq & C\int_{\mathbb{H}^n} |v|^{\alpha-Q} (1+|v|^{\alpha-Q})^{q_j-1} dv\leq C,
\end{align*}
which contradicts with \eqref{cri re-7}. This shows that {\bf Case 2a} does not appear.

Now, we give the proof of the claim \eqref{cri re-4}.
Noting that
\begin{equation}\label{cri re-6}
    N_{Q,\alpha,p_j} 
    =2^{\frac{Q+\alpha-2}2} \int_{\mathbb{H}^n} \left((1+|\lambda_j z'|^2)^2+(\lambda_j^2 t')^2\right)^{\frac{\alpha-Q}{4}(q_\alpha-q_j)} \frac{\Psi_j^{q_j-1}(v)}{|v|^{Q-\alpha}} dv\leq C<+\infty
\end{equation}
uniformly as $j\to\infty$, we obtain from \eqref{cri re-7} that as $j\to\infty$ and $|u|\geq 1$,
\begin{align}\label{cri re-8}
    &\int_{\mathbb{H}^n} \left((1+|\lambda_j z'|^2)^2+(\lambda_j^2 t')^2\right)^{\frac{\alpha-Q}{4}(q_\alpha-q_j)} \Psi_j^{q_j-1}(v) dv\nonumber\\
    \leq &\int_{|v|\leq 1} C \left((1+|\lambda_j z'|^2)^2+(\lambda_j^2 t')^2\right)^{\frac{\alpha-Q}{4}(q_\alpha-q_j)} dv\nonumber\\ &+\int_{|v|>1} \left((1+|\lambda_j z'|^2)^2+(\lambda_j^2 t')^2\right)^{\frac{\alpha-Q}{4}(q_\alpha-q_j)} \frac{\Psi_j^{q_j-1}(v)}{|v|^{Q-\alpha}} dv\leq C<+\infty
\end{align}
and
\begin{align}\label{cri re-9}
    &\int_{\mathbb{H}^n} \frac{|u^{-1}v|^{\alpha-Q}}{|u|^{\alpha-Q}} \left((1+|\lambda_j z'|^2)^2+(\lambda_j^2 t')^2\right)^{\frac{\alpha-Q}{4}(q_\alpha-q_j)} \Psi_j^{q_j-1}(v) dv\nonumber\\
    \leq &C\int_{\mathbb{H}^n} (1+|v|^{\alpha-Q}) \left((1+|\lambda_j z'|^2)^2+(\lambda_j^2 t')^2\right)^{\frac{\alpha-Q}{4}(q_\alpha-q_j)} \Psi_j^{q_j-1}(v) dv\nonumber\\
    \leq & C<+\infty
\end{align}
uniformly. By dominated convergence theorem,
\begin{align}\label{cri re-10}
    &\lim_{|u|\rightarrow +\infty} \frac{\Phi_j(u)}{|u|^{\alpha-Q}}\nonumber\\
    =& \frac{2^{\frac{Q+\alpha-2}2}} {N_{Q,\alpha, p_j}} \int_{\mathbb{H}^n} \left((1+|\lambda_j z'|^2)^2+(\lambda_j^2 t')^2\right)^{\frac{\alpha-Q}{4}(q_\alpha-q_j)} \Psi_j^{q_j-1}(v) dv \leq C.
\end{align}
On the other hand, if we can prove
\begin{equation}\label{cri re-11}
    \int_{\mathbb{H}^n} \left((1+|\lambda_j z'|^2)^2+(\lambda_j^2 t')^2\right)^{\frac{\alpha-Q}{4}(q_\alpha-q_j)} \Psi_j^{q_j-1}(v) dv\ge C'>0 \ \ \mbox{as} \ \ j\to\infty,
\end{equation}
then we have the claim \eqref{cri re-4}. By contradiction, we assume that \eqref{cri re-11} does not hold. Then, there exists a subsequence (still denoted as $\{\Psi_j\}$) such that
\begin{equation}\label{cri re-13}
    \int_{\mathbb{H}^n} \left((1+|\lambda_j z'|^2)^2+(\lambda_j^2 t')^2\right)^{\frac{\alpha-Q}{4}(q_\alpha-q_j)} \Psi_j^{q_j-1}(v) dv\rightarrow 0, \quad\text{as}\quad j\rightarrow+\infty.
\end{equation}
For any $u\in B(0, 1)$,
\begin{align*}
    1\le & \Phi_j(u) =\frac{2^{\frac{Q+\alpha-2}2}} {N_{Q,\alpha, p_j}} \int_{\mathbb{H}^n} \left((1+|\lambda_j z'|^2)^2+(\lambda_j^2 t')^2\right)^{\frac{\alpha-Q}{4}(q_\alpha-q_j)} \frac{\Psi_j^{q_j-1}(v)}{|u^{-1}v|^{Q-\alpha}} dv\\
    \le&\frac{2^{\frac{Q+\alpha-2}2}} {N_{Q,\alpha, p_j}} \Bigl( (4\gamma)^{\alpha-Q} \int_{|v|\leq 3} \left((1+|\lambda_j z'|^2)^2+(\lambda_j^2 t')^2\right)^{\frac{\alpha-Q}{4}(q_\alpha-q_j)} \Psi_j^{q_j-1}(v) dv\nonumber\\
    &\hspace{0.5cm}+\int_{|v|>3} (\frac 43\gamma)^{\alpha-Q} \left((1+|\lambda_j z'|^2)^2+(\lambda_j^2 t')^2\right)^{\frac{\alpha-Q}{4}(q_\alpha-q_j)} \frac{\Psi_j^{q_j-1}(v)}{|v|^{Q-\alpha}} dv\Bigr)\\
    \le&\frac{2^{\frac{Q+\alpha-2}2}} {N_{Q,\alpha, p_j}} (4\gamma)^{\alpha-Q} \int_{|v|\leq 3} \left((1+|\lambda_j z'|^2)^2+(\lambda_j^2 t')^2\right)^{\frac{\alpha-Q}{4}(q_\alpha-q_j)} \Psi_j^{q_j-1}(v) dv\\    &\hspace{0.2cm}+ (\frac 43\gamma)^{\alpha-Q}.
\end{align*}
From \eqref{cri re-13}, there exists $N_0>0$ such that $$1\leq\Phi_j(u)\leq 1+(\frac 43\gamma)^{\alpha-Q}$$
for $j\geq N_0$. Then, for $|u|\geq 3$ , if follows from \eqref{cri re-5}  that
\begin{align}\label{cri re-12}
    \Psi_j(u)\geq & \frac{2^{\frac{Q+\alpha-2}2}} {N_{Q,\alpha, p_j}}  \int_{|v|\leq 1} \left((1+|\lambda_j z'|^2)^2+(\lambda_j^2 t')^2\right)^{\frac{\alpha-Q}{4}(q_\alpha-q_j)} \frac{\Phi_j^{q_j-1}(v)}{|u^{-1}v|^{Q-\alpha}} dv\nonumber\\
    \geq & C \int _{|v|\leq 1} {|u|}^{\alpha-Q} \left((1+|\lambda_j z'|^2)^2+(\lambda_j^2 t')^2\right)^{\frac{\alpha-Q}{4}(q_\alpha-q_j)} dv
    \geq  C|u|^{\alpha-Q},
\end{align}
for $j\geq N_0$. We used the fact  in the last inequality: as $j\to \infty$,
    $$\left((1+|\lambda_j z'|^2)^2+(\lambda_j^2 t')^2\right)^{\frac{\alpha-Q}{4}(q_\alpha-q_j)} \rightarrow 1\quad \text{uniformly on}\ B(0,1).$$
Letting $p_j$ close to $p_\alpha$ and choosing $R>>3$, it follows from \eqref{cri re-12}  that
\begin{align*}
    N_{Q,\alpha,p_j}=& 2^{\frac{Q+\alpha-2}2} \int_{\mathbb{H}^n} \left((1+|\lambda_j z'|^2)^2+(\lambda_j^2 t')^2\right)^{\frac{\alpha-Q}{4}(q_\alpha-q_j)} \frac{\Psi_j^{q_j-1}(v)}{|v|^{Q-\alpha}} dv\\
    \leq & C R^{\alpha-Q}\int_{|v|\leq R} \left((1+|\lambda_j z'|^2)^2+(\lambda_j^2 t')^2\right)^{\frac{\alpha-Q}{4}(q_\alpha-q_j)} \Psi_j^{q_j-1}(v)  dv\\
    & +C\int_{|v|>R}|v|^{\alpha-Q} \cdot |v|^{(\alpha-Q)(q_j-1)} dy\\
    \leq &  C R^{\alpha-Q}\int_{|v|\leq R} \left((1+|\lambda_j z'|^2)^2+(\lambda_j^2 t')^2\right)^{\frac{\alpha-Q}{4}(q_\alpha-q_j)} \Psi_j^{q_j-1}(v)  dv\\
    &\hspace{0.5cm}+ C R^{(\alpha-Q)q_j+Q}.
\end{align*}
Taking firstly $R$ large enough and then letting $j\rightarrow +\infty$, we have $N_{Q,\alpha,p_j}\rightarrow 0$, which is contradiction with $N_{Q,\alpha,p_j}\rightarrow N_{Q,\alpha}$. Hence, \eqref{cri re-11} holds.

\textbf{Case 2b:} $\limsup_{j\rightarrow +\infty} \frac{f_{p_j}(\mathfrak{N})}{\max_{\xi\in\mathbb{S}^{2n+1}} g_{p_j}}=0$. Then, there exists a subsequence of $\{p_j\}$ (still denoted as $\{p_j\}$) such that $f_{p_j}(\mathfrak{N})\rightarrow +\infty$ and $\frac{f_{p_j}(\mathfrak{N})}{\max_{\xi\in\mathbb{S}^{2n+1}} g_{p_j}}\rightarrow 0$, which implies that $\max_{\xi\in\mathbb{S}^{2n+1}} g_{p_j}\rightarrow +\infty$. Similar to {\bf Case 2a}, we can show that {\bf Case 2b} does not appear.

\textbf{Case 2c:} $\limsup_{j\rightarrow +\infty} \frac{f_{p_j}(\mathfrak{N})}{\max_{\xi\in\mathbb{S}^{2n+1}} g_{p_j}}=c_0\in (0,+\infty)$. Then, there exists a  subsequence of $\{p_j\}$ (still denoted as $\{p_j\}$) such that $f_{p_j}(\mathfrak{N})\rightarrow +\infty$, $\max g_{p_j}\rightarrow +\infty$ and $\frac{f_{p_j}(\mathfrak{N})}{\max_{\xi\in\mathbb{S}^{2n+1}} g_{p_j}} \rightarrow c_0\in (0,+\infty)$. As {\bf Case 2a}, choose a sequence of function pairs $\{\Phi_j, \Psi_j\}$ defined as \eqref{cri re-14}, which satisfies \eqref{cri re-5}, $\Phi_j(u)\geq \Phi_j(0)=1$ and
\begin{equation}\label{cri re-15}
    \Psi_j(u)\geq \lambda_j^{\alpha/(q_j-2)} \min_{\xi\in\mathbb{S}^{2n+1}} \psi_j=\frac{\min_{\xi\in\mathbb{S}^{2n+1}} \psi_j}{\phi_j(\mathcal{C}(0))}\rightarrow c_0^{1-p_\alpha} \in (0,+\infty)
\end{equation}
uniformly for any $u$ as $j\rightarrow +\infty$. So, $\{\Psi_j(u)\}$ have uniformly lower bound $C>0$.

Repeating the proof of \eqref{cri re-4}, there exist two positive constants $C_1$ and $C_2$ such that, as $j\rightarrow +\infty$,
\begin{gather}
    0<C_1(1+|u|^{\alpha-Q})\le \Phi_j(u) \le C_2(1+|u|^{\alpha-Q}),\label{cri re-16}\\
    0<C_1(1+|u|^{\alpha-Q})\le \Psi_j(u) \le C_2(1+|u|^{\alpha-Q})\label{cri re-17}
\end{gather}
uniformly for any $u$.

For any given constant $R_0>0$ and any $u\in B(0,R_0)$, as $j\rightarrow +\infty$, we have by \eqref{cri re-17} that
\begin{align}\label{cri re-18}
    &2^{-\frac{Q+\alpha-2}2}N_{Q,\alpha,p_j}\Phi_j(u)= \int_{\mathbb{H}^n} \left((1+|\lambda_j z'|^2)^2+(\lambda_j^2 t')^2\right)^{\frac{\alpha-Q}{4}(q_\alpha-q_j)} \frac{\Psi_j^{q_j-1}(v)}{|u^{-1}v|^{Q-\alpha}} dv \nonumber\\
    =& \int_{\mathbb{H}^n} \left((1+|\lambda_j (z+z')|^2)^2+(\lambda_j^2 (t+t'+2\text{Im}(z\cdot\bar{z'})))^2\right)^{\frac{\alpha-Q}{4}(q_\alpha-q_j)} \frac{\Psi_j^{q_j-1}(uv)}{|v|^{Q-\alpha}} dv \nonumber\\
    \leq & \int_{|v|\leq 2R_0} |v|^{\alpha-Q} C_1^{q_j-1} dv +\int_{|v|>2R_0} |v|^{\alpha-Q}  C_1^{q_j-1}|uv|^{(\alpha-Q)(q_j-1)}dv \nonumber\\
    \leq & C(2R_0)^\alpha +C\int_{|v|>2R_0} |v|^{(\alpha-Q)q_j}dv\leq C,
\end{align}
namely, $\Phi_j(u)$ is uniformly bounded on $B(0,R_0)$. Similarly, $\Psi_j(u)$ is also uniformly bounded on $B(0,R_0)$.

Noting $\alpha>Q\geq 4$ and arguing as \eqref{cri re-18}, we have that, as $j\rightarrow+\infty$,
\begin{align*}
    &\int_{\mathbb{H}^n} \left((1+|\lambda_j z'|^2)^2+(\lambda_j^2 t')^2\right)^{\frac{\alpha-Q}{4}(q_\alpha-q_j)} \frac{\Psi_j^{q_j-1}(v)}{|u^{-1}v|^{Q-\alpha+1}} dv\leq C
\intertext{and}
    &\int_{\mathbb{H}^n} \left((1+|\lambda_j z'|^2)^2+(\lambda_j^2 t')^2\right)^{\frac{\alpha-Q}{4}(q_\alpha-q_j)} \frac{\Psi_j^{q_j-1}(v)}{|u^{-1}v|^{Q-\alpha+2}} dv\leq C
\end{align*}
uniformly for any $u\in B(0,R_0)$. So, for any $u\in B(0,R_0)$ and $l,k=1,2,\cdots,2n$, a direct computation yields
\begin{align*}
    &T_lT_k \Phi_j(u)= \frac{2^{\frac{Q+\alpha-2}2}}{N(Q,\alpha,p_j)} \int_{\mathbb{H}^n} T_lT_k (|u^{-1}v|^{\alpha-Q}) \frac{\Psi_j^{q_j-1}(v)}{\left((1+|\lambda_j z'|^2)^2+(\lambda_j^2 t')^2\right)^{\frac{Q-\alpha}{4}(q_\alpha-q_j)}} dv,
\end{align*}
where $T_l=X_l, T_{l+n}=Y_l$ for $l=1,2,\cdots,n$ and 
    $$ X_l=\frac{\partial}{\partial{x_i}}+2y_i\frac{\partial}{\partial{t}},\quad Y_l
    =\frac{\partial}{\partial{y_i}}-2x_i\frac{\partial}{\partial{t}},\quad l=1,2,\cdots,n$$ 
are the
left invariant vector fields on the Heisenberg group. Moreover, we know that $\Phi_j\in C^1(B(0,R_0))$ by Theorem 20.1 of \cite{Folland-Stein1974}.
Since the arbitrariness of $R_0$, we know that $\Phi_j(u)\in C^1(\mathbb{H}^n)$ and $\|\Phi_j\|_{C^1(B(0,R_0))}$ is uniformly bounded. Similarly, we can obtain that $\Psi_j(u)\in C^1(\mathbb{H}^n)$ and $\|\Psi_j\|_{C^1(B(0,R_0))}$ is uniformly bounded.

By Arzel\`{a}-Ascoli theorem, there exist two subsequences of $\{\Phi_j\}$ and $\{\Psi_j\}$ (still denoted as $\{\Phi_j\}$ and $\{\Psi_j\}$) and two functions $U,V\in C^1(\mathbb{H}^n)$ such that
\begin{equation}\label{cri re-19}
    \Phi_j\rightarrow U\quad \text{and}\quad \Psi_j\rightarrow V \quad\text{uniformly on} \quad B(0,R_0).
\end{equation}
Moreover, by \eqref{cri re-16} and \eqref{cri re-17}, it holds
\begin{gather}
    0<C_1(1+|u|^{\alpha-Q})\le U(u) \le C_2(1+|u|^{\alpha-Q}),\label{cri re-21}\\
    0<C_1(1+|u|^{\alpha-Q})\le V(u) \le C_2(1+|u|^{\alpha-Q}).\label{cri re-22}
\end{gather}
By the arbitrariness of $R_0$, we  prove that $U(u)$ and $V(u)$ satisfy
\begin{equation}\label{cri re-20}
    \begin{cases} N_{Q,\alpha} U(u)=2^{\frac{Q+\alpha-2}2}\int_{\mathbb{H}^n} |u^{-1}v|^{\alpha-Q} V^{q_\alpha-1}(v)dv\quad \text{in}\quad \mathbb{H}^n,\\ N_{Q,\alpha} V(u)=2^{\frac{Q+\alpha-2}2} \int_{\mathbb{H}^n} |u^{-1}v|^{\alpha-Q} U^{q_\alpha-1}(v)dv\quad \text{in}\quad \mathbb{H}^n.\end{cases}
\end{equation}
Since
\begin{align*}
    1 =&\int_{\mathbb{S}^{2n+1}} \phi_j^{q_j}(\xi)d\xi \nonumber\\
    =& 2^{Q-1} \int_{\mathbb{H}^n} \Phi_j^{q_j}(u) \lambda_j^{Q-\frac{\alpha q_j}{q_j-2}} {\left((1+|\lambda_j z'|^2)^2+(\lambda_j^2 t')^2\right)^{\frac{\alpha-Q}{4}(q_\alpha-q_j)}} dv\\
    \leq & 2^{Q-1} \int_{\mathbb{H}^n} \Phi_j^{q_j}(u) {\left((1+|\lambda_j z'|^2)^2+(\lambda_j^2 t')^2\right)^{\frac{\alpha-Q}{4}(q_\alpha-q_j)}} dv
\end{align*}
and
$$\Phi_j^{q_j}(u) \left((1+|\lambda_j z'|^2)^2+(\lambda_j^2 t')^2\right)^{\frac{\alpha-Q}{4}(q_\alpha-q_j)} \rightarrow U^{q_\alpha}(u)$$ uniformly on any compact domain,  it follows from \eqref{cri re-16} that
    $$\int_{\mathbb{H}^n}U^{q_\alpha}du =\lim_{j\rightarrow +\infty} \int_{\mathbb{H}^n} \Phi_j^{q_j}(u) \left((1+|\lambda_j z'|^2)^2+(\lambda_j^2 t')^2\right)^{\frac{\alpha-Q}{4}(q_\alpha-q_j)} du \geq 2^{1-Q}.$$
Similarly, by \eqref{cri re-17} it also holds
    $$\int_{\mathbb{H}^n}V^{q_\alpha}du =\lim_{j\rightarrow +\infty} \int_{\mathbb{H}^n} \Psi_j^{q_j}(u) \left((1+|\lambda_j z'|^2)^2+(\lambda_j^2 t')^2\right)^{\frac{\alpha-Q}{4}(q_\alpha-q_j)} du \geq 2^{1-Q}.$$
Let $F(u)=U^{q_\alpha-1}(u)$ and $G(u)=V^{q_\alpha-1}(u)$, we have $\int_{\mathbb{H}^n} F^{p_\alpha}du\geq 2^{1-Q}$, $\int_{\mathbb{H}^n} G^{p_\alpha} du\geq 2^{1-Q}$ and $F,G$ satisfy
    $$\begin{cases} N_{Q,\alpha} F^{p_\alpha-1}(u)=2^{\frac{Q+\alpha-2}2} \int_{\mathbb{H}^n} |u^{-1}v|^{\alpha-Q} G(v)dv\quad \text{in}\quad \mathbb{H}^n,\\ N_{Q,\alpha} G^{p_\alpha-1}(u)=2^{\frac{Q+\alpha-2}2} \int_{\mathbb{H}^n} |u^{-1}v|^{\alpha-Q} F(v)dv\quad \text{in}\quad \mathbb{H}^n.\end{cases}$$
Combining $2>p_\alpha$  wtih Cayley transformation, it holds
\begin{align*}
    N_{Q,\alpha}^2=& \frac{\left(2^{\frac{Q+\alpha-2}2} \int_{\mathbb{H}^n} \int_{\mathbb{H}^n} F(u) |u^{-1}v|^{\alpha-Q} G(v) dv du\right)^2}{\int_{\mathbb{H}^n} F^{p_\alpha} du \int_{\mathbb{H}^n} G^{p_\alpha} du}\\
    \geq & \frac{\left(2^{\frac{Q+\alpha-2}2} \int_{\mathbb{H}^n} \int_{\mathbb{H}^n} F(u) |u^{-1}v|^{\alpha-Q} G(v) dv du\right)^2}{2^{2-2Q} \left(2^{Q-1}\int_{\mathbb{H}^n} F^{p_\alpha} du\right)^{2/p_\alpha} \left(2^{Q-1} \int_{\mathbb{H}^n} G^{p_\alpha} du\right)^{2/p_\alpha}}\\
    =& \frac{\left(2^{\frac{Q+\alpha-2}2} 2^{-\frac{n(Q-\alpha)}Q} \int_{\mathbb{S}^{2n+1}} \int_{\mathbb{S}^{2n+1}} f(\xi) |1-\xi\cdot\bar\eta|^{(\alpha-Q)/2} g(\eta) d\eta d\xi\right)^2}{2^{2-2Q} 2^{\frac{4(Q-1)}{p_\alpha}} \left(\int_{\mathbb{S}^{2n+1}} f^{p_\alpha} d\xi\right)^{2/p_\alpha} \left( \int_{\mathbb{S}^{2n+1}} g^{p_\alpha} d\xi\right)^{2/p_\alpha}}\\
    =& \frac{\left( \int_{\mathbb{S}^{2n+1}} \int_{\mathbb{S}^{2n+1}} f(\xi) |1-\xi\cdot\bar\eta|^{(\alpha-Q)/2} g(\eta) d\eta d\xi\right)^2}{ \left(\int_{\mathbb{S}^{2n+1}} f^{p_\alpha} d\xi\right)^{2/p_\alpha} \left( \int_{\mathbb{S}^{2n+1}} g^{p_\alpha} d\xi\right)^{2/p_\alpha}},
\end{align*}
where
\begin{gather*}
    F(u)=f(\mathcal{C}(u)) J_\mathcal{C}(u)^{1/p_\alpha},\quad
    G(u)=g(\mathcal{C}(u)) J_\mathcal{C}(u)^{1/p_\alpha}.
\end{gather*}
Hence, $\{f(\xi),g(\xi)\}$ is a pair of minimizer of sharp constant $N_{Q,\alpha}$. Furthermore, they satisfy the Euler-Lagrange equations \eqref{cri re-23}.

By \eqref{cri re-21} and \eqref{cri re-22}, there exists a positive constant $C$ such that
    $$0<\frac 1C\leq f,g\leq C.$$
Since $\alpha>Q\geq 4$, we know by \eqref{cri re-23} that $f,g\in C^1(\mathbb{S}^{2n+1})$. \hspace*{\stretch{1}}$\Box$


\medskip

\noindent {\bf Acknowledgements}\\
\indent The author would like to thank Professor Meijun Zhu for valuable discussions and suggestions. The project is supported by  the National Natural Science Foundation of China (Grant No. 12071269) and Natural Science Foundation of Zhejiang Province (Grant No.  LY18A010013). 
\small

\end{document}